\newcommand\blfootnote[1]{%
  \begingroup
  \renewcommand\thefootnote{}\footnote{#1}%
  \addtocounter{footnote}{-1}%
  \endgroup
}
\definecolor{darkblue}{rgb}{0,0,0.545098}
\definecolor{darkgreen}{rgb}{0,0.392157,0}
\newtheorem{theorem}{Theorem}[section]
\newtheorem{lemma}[theorem]{Lemma}
\newtheorem{proposition}[theorem]{Proposition}
\theoremstyle{definition}
\newtheorem{definition}[theorem]{Definition}
\theoremstyle{remark}
\newtheorem{remark}[theorem]{Remark}
\newtheorem{remarks}[theorem]{Remarks}
\numberwithin{equation}{section}
\begin{document}
\title[Lower Bound and optimality for a nonlinearly damped Timoshenko system]%
{Lower Bound and optimality for a nonlinearly damped Timoshenko system with
thermoelasticity }
\author[Bchatnia ]{Ahmed Bchatnia$^1$}
\address{$^1$ UR ANALYSE NON-LIN\'EAIRE ET G\'EOMETRIE, UR13ES32, Department of
Mathematics, Faculty of Sciences of Tunis, University of Tunis El-Manar,
2092 El Manar II, Tunisia}
\email{ahmed.bchatnia@fst.rnu.tn}
\author[Chebbi]{Sabrine chebbi$^1$}
\email{sabrinech.chebbi91@gmail.com}
\author[Hamouda]{Makram Hamouda$^{ 2^{\ast}}$}
\address{$^2$ Institute for Scientific Computing and Applied Mathematics, Indiana
University, 831 E. 3rd St., Rawles Hall, Bloomington IN 47405, United States}
\email{mahamoud@indiana.edu}
\author[Soufyane]{Abdelaziz Soufyane$^3$}
\address{$^3$ Department of Mathematics, College of Sciences, University of
Sharjah, P.O.Box 27272, Sharjah, UAE.}
\email{asoufyane@sharjah.ac.ae}

\maketitle

\begin{abstract}
In this paper, we consider a vibrating nonlinear Timoshenko system with
thermoelasticity with second sound. We first investigate the 
stability of this system, then we devote our efforts to obtain the
strong lower energy estimates using  Alabau-Boussouira's energy comparison principle introduced in \cite{2} (see also \cite{alabau}).
 We  extend to our model the nice results achieved in \cite{alabau} for the case of nonlinearly damped Timoshenko system with thermoelasticity. The proof of our results relies on the approach in \cite{AB2, AB1}.
\end{abstract}

\blfootnote{MSC codes:
\subjclass{35B35, 35B40, 35L51, 93D20.}\\
Key words:\textit{\ Lower bounds, Optimality, Thermoelasticity, Timoshenko system, Strong asymptotic stability.}\newline
* Corresponding author: mahamoud@indiana.edu.}

\section{Introduction}

\label{sect1}
Mecanical structures such  as beams and plates are a central part of life today, their  vibration properties are extensively investigated by many researchers. These vibrations are  undesirable because of their damaging and destructing nature. To reduce  these harmful vibrations, several control mechanisms have been disigned. In order to do that, it is nutural to model and undrestand the corresponding equations of  these problems.\\
In this article we are concerned with the following nonlinearly damped
Timoshenko system in a one-dimensional bounded domain with thermoelasticity
where the heat flux is given by the Cattaneo's law:
\begin{equation}
\left\{
\begin{array}{l}
\rho _{1}\varphi _{tt}-k(\varphi _{x}+\psi )_{x}=0,\hspace{6.2cm} \mbox{
in}\ (0,1)\times \mathrm{I\hskip-2ptR}_{+}, \\
\rho _{2}\psi _{tt}-b\psi _{xx}+k(\varphi _{x}+\psi )+\delta \theta
_{x}+a(x)g(\psi _{t})=0,\hspace{2.2cm}\mbox{in} \ (0,1)\times \mathrm{I%
\hskip-2ptR}_{+}, \\
\rho _{3}\theta _{t}+q_{x}+\delta \psi _{xt}=0,\hspace{6.82cm}\mbox{in}\
(0,1)\times \mathrm{I\hskip-2ptR}_{+}, \\
\tau q_{t}+\beta q+\theta _{x}=0,\hspace{7.3cm}\mbox{in} \ (0,1)\times
\mathrm{I\hskip-2ptR}_{+}.%
\end{array}%
\right.  \label{1}
\end{equation}%
We associate with \eqref{1} the following Dirichlet boundary conditions
\begin{equation}
\varphi (0,t)=\varphi (1,t)=\psi (0,t)=\psi (1,t)=q(0,t)=q(1,t)=0,\hspace{%
0.65cm}\forall \mbox{ }t\geq 0.  \label{cb}
\end{equation}%
Moreover, the initial conditions for the system \eqref{1}  are given by :
\begin{equation}
\left\{
\begin{array}{l}
\varphi (x,0)=\varphi _{0}(x),\mbox{ }\varphi _{t}(x,0)=\varphi _{1}(x),%
\hspace{4.8cm}\forall \mbox{ }x\in \ (0,1), \\
\psi (x,0)=\psi _{0}(x),\mbox{ }\psi _{t}(x,0)=\psi _{1}(x),\hspace{4.8cm}%
\forall \mbox{ }x\in\  (0,1), \\
\theta (x,0)=\theta _{0}(x),\mbox{ }q(x,0)=q_{0}(x),\hspace{5.2cm}\forall %
\mbox{ }x\in \ (0,1),%
\end{array}%
\right.  \label{ci}
\end{equation}%
where $t\in (0,\infty )$ denotes the time variable and $x\in (0,1)$ is the
space variable, the function $\varphi $ is the displacement vector, $\psi $
is the rotation angle of the filament, the function $\theta $ is the
temperature difference, $q=q(x,t)\in \mathbb{R}$ is the heat flux, and $\rho
_{1}$, $\rho _{2}$, $\rho _{3}$, $b$, $k$, $\delta $ and $\beta $ are positive
constants.\newline

The Timoshenko model describes the vibration of a beam when the transverse
shear strain is significant. In 1920, Timoshenko \cite{45} introduced a
purely conserved hyperbolic system given by
\begin{equation}
\left\{
\begin{array}{l}
\rho _{1}\varphi _{tt}-k(\varphi _{x}+\psi )_{x}=0,\hspace{6.2cm}\mbox{%
in}\ (0,1)\times \mathrm{I\hskip-2ptR}_{+}, \\
\rho _{2}\psi _{tt}-b\psi _{xx}+k(\varphi _{x}+\psi )=0,\hspace{5.1cm}%
\mbox{in}\ (0,1)\times \mathrm{I\hskip-2ptR}_{+}.%
\end{array}%
\right.  \label{TIMOS}
\end{equation}%
The well understanding of this model was the goal of a great number of
researchers, thus, an important amount of research has been devoted to the
issue of the stabilization of the Timoshenko system by the use of diverse
types of dissipative mechanisms aiming to obtain a solution which decays
uniformly to the stable state as time goes to infinity. To achieve this goal
several upper energy estimates have been derived. For an overview purpose,
we shall mention some known results in this regard.
Kim and Renardy \cite{16}, Messaoudi and Mustafa \cite{19}, Raposo et al.
\cite{Timo13}, and others, showed that the presence of damping terms on both
equations \eqref{TIMOS} leads to uniform stability result regardless of the
values of the damping coefficients. The situation is much different, when
the damping term is only imposed on the rotation angle equation in the Timoshenko
system. In this case, the exponential stability holds if and only if the propagation velocities are equal. It is worth noting that the first result including the linear and nonlinear indirect damping cases and showing polynomial stability for different speeds of propagation was established in \cite{AB2}
giving thus optimal results in the nonlinear damping case
 (and getting as a particular case the exponential decay for the same speeds of propagation $(\frac{k}{\rho_1}=\frac{b}{\rho_2})$); see \cite{AB2, noeq1, noeq2, noeq, rivera} and the references therein.\\
Concerning stabilization via heat effect, Rivera and Racke \cite{MunozRiveraRackeMildy} investigated the following system
\[\left\lbrace
\begin{array}{l}
\rho _{1}\varphi _{tt}-\sigma (\varphi _{x},\psi )_{x}=0,\hspace{3.2cm}\mbox{ in }\,(0,L)\times \mathrm{I\hskip-2ptR}_{+}, \\
\rho _{2}\psi _{tt}-b\psi _{xx}+k(\varphi _{x}+\psi )+\gamma \theta _{x}=0,\hspace{0.8cm}\mbox{ in }\,(0,L)\times \mathrm{I\hskip-2ptR}_{+}
,\\
\rho _{3}\theta _{t}-k\theta _{xx}+\gamma \psi _{xt}=0,\hspace{3cm}\mbox{ in }(0,L)\times \mathrm{I\hskip-2ptR}_{+},
\end{array}\right.
\]
where $\varphi ,\psi ,\theta $ are functions of $(x,t)$ model the transverse
displacement of the beam, the rotation angle of the filament, and the
difference temperature respectively. Under appropriate conditions of $\sigma
,\rho _{i},b,k,\gamma ,$ they proved several exponential decay results for
the linearized system and non exponential stability result for the case of
different wave speeds.

Concerning Timoshenko systems of thermoelasticity with second sound,
Messaoudi \textit{et al.} \cite{MessaoudiPokojovy} studied
\[\left\lbrace
\begin{array}{l}
\rho _{1}\varphi _{tt}-\sigma (\varphi _{x},\psi )_{x}+\mu \varphi
_{t}=0,\hspace{2.6cm}\mbox{ in }\,(0,L)\times  \mathrm{I\hskip-2ptR}_{+} ,\\
\rho _{2}\psi _{tt}-b\psi _{xx}+k(\varphi _{x}+\psi )+\beta \theta
_{x}=0,\hspace{1.3cm}\mbox{ in }\, (0,L)\times  \mathrm{I\hskip-2ptR}_{+}, \\
\rho _{3}\theta _{t}+\gamma q_{x}+\delta \psi _{tx}=0,\hspace{3.7cm}\mbox{ in }\,
(0,L)\times  \mathrm{I\hskip-2ptR}_{+} ,\\
\tau _{0}q_{t}+q+\kappa \theta _{x}=0,\hspace{4.3cm}\mbox{ in }\, (0,L)\times  \mathrm{I\hskip-2ptR}_{+},
\end{array}\right.
\]
where $\varphi =\varphi (x,t)$ is the displacement vector, $\psi =\psi (x,t)$
is the rotation angle of the filament, $\theta =\theta (x,t)$ is the
temperature difference, $q=q(x,t)$ is the heat flux vector, $\rho _{1}$, $%
\rho _{2}$, $\rho _{3}$, $b$, $k$, $\gamma $, $\delta $, $\kappa $, $\mu $, $%
\tau _{0}$ are positive constants. The nonlinear function $\sigma $ is
assumed to be sufficiently smooth and satisfy
\[
\sigma _{\varphi _{x}}(0,0)=\sigma _{\psi }(0,0)=k
\]
and
\[
\sigma _{\varphi _{x}\varphi _{x}}(0,0)=\sigma _{\varphi _{x}\psi
}(0,0)=\sigma _{\psi \psi }=0.
\]
Several exponential decay results for both linear and nonlinear cases have
been established in the presence of the extra frictional damping $\mu
\varphi _{t}$.

Fern\'{a}ndez Sare and Racke \cite{18} considered
\begin{equation}
\left\lbrace
\begin{array}{l}
\rho _{1}\varphi _{tt}-k(\varphi _{x}+\psi )_{x}=0,\hspace{3.4cm}\mbox{ in }\,
(0,L)\times  \mathrm{I\hskip-2ptR}_{+}, \\
\rho _{2}\psi _{tt}-b\psi _{xx}+k(\varphi _{x}+\psi )+\beta \theta
_{x}=0,\hspace{1.33cm}\mbox{ in }\, (0,L)\times  \mathrm{I\hskip-2ptR}_{+}, \\
\rho _{3}\theta _{t}+\gamma q_{x}+\delta \psi _{tx}=0,\hspace{3.7cm}\mbox{ in }\,
(0,L)\times  \mathrm{I\hskip-2ptR}_{+}, \\
\tau _{0}q_{t}+q+\kappa \theta _{x}=0,\hspace{4.3cm}\mbox{ in }\,  (0,L)\times  \mathrm{I\hskip-2ptR}_{+},
\end{array}\right. \label{Timo6}
\end{equation}

and showed that, in the absence of the extra frictional damping ($\mu =0$),
the coupling via Cattaneo' s law causes loss of the exponential decay
usually obtained in the case of coupling via Fourier' s law \cite{MunozRiveraRackeMildy}. This
surprising property holds even for systems with history of the form
\begin{equation}
\left\lbrace
\begin{array}{l}
\rho _{1}\varphi _{tt}-k(\varphi _{x}+\psi )_{x}=0,\hspace{7.07cm}\mbox{ in }\,
(0,L)\times  \mathrm{I\hskip-2ptR}_{+}, \\
\rho _{2}\psi _{tt}-b\psi _{xx}+k(\varphi _{x}+\psi )+\int_{0}^{+\infty
}g(s)\psi _{xx}(.,t-s)ds+\beta \theta _{x}=0,\hspace{0.55cm}\mbox{ in }\,
(0,L)\times  \mathrm{I\hskip-2ptR}_{+}, \\
\rho _{3}\theta _{t}+\gamma q_{x}+\delta \psi _{tx}=0, \hspace{7.33cm}\mbox{ in }\,
(0,L)\times  \mathrm{I\hskip-2ptR}_{+},\\
\tau _{0}q_{t}+q+\kappa \theta _{x}=0,\hspace{7.92cm}\mbox{ in }\,
(0,L)\times  \mathrm{I\hskip-2ptR}_{+},
\end{array}\right. \label{Timo7}
\end{equation}
 Precisely, it has been shown that both systems (\ref{Timo6}) and (\ref{Timo7}) are no longer
exponentially stable even for equal-wave speeds $\left( \frac{k}{\rho _{1}}=%
\frac{b}{\rho _{2}}\right) .$ However, no other rate of decay has been
discussed.\\
Very recently, Santos et al. \cite{Santos} considered (\ref{Timo6}) and introduced a new
stability number
\[
\chi =\left( \tau _{0}-\frac{\rho _{1}}{k\rho _{3}}\right) \left( \rho _{2}-%
\frac{\rho _{1}b}{k}\right) -\frac{\rho _{1}\beta ^{2}\rho _{1}}{k\rho _{3}}
\]
and used the semigroup method to obtain exponential decay result for $\chi =0
$ and a polynomial decay for $\chi \neq 0.$\\
Later, in \cite{ali} the authors considered a vibrating nonlinear Timoshenko system with thermoelasticity with second sound.  Precisely, they looked into the following system
 \begin{equation}
\left\{
\begin{array}{l}
\rho _{1}\varphi _{tt}-k(\varphi _{x}+\psi )_{x}=0,\hspace{6.2cm}\textnormal{in }%
(0,1)\times \mathrm{I\hskip-2ptR}_{+}, \\
\rho _{2}\psi _{tt}-b \psi _{xx}+k(\varphi _{x}+\psi )
+\delta \theta _{x}+\alpha (t)h(\psi _{t})=0,\hspace{2.1cm} \textnormal{in }(0,1)\times
\mathrm{I\hskip-2ptR}_{+}, \\
\rho _{3}\theta _{t}+q_{x}+\delta \psi _{xt}=0,\hspace{6.73cm} \textnormal{in }(0,1)\times
\mathrm{I\hskip-2ptR}_{+}, \\
\tau q_{t}+\beta q+\theta _{x}=0,\hspace{7.2cm} \textnormal{in }(0,1)\times \mathrm{I\hskip%
-2ptR}_{+}, \\
\varphi _{x}(0,t)=\varphi _{x}(1,t)=\psi (0,t)=\psi (1,t)=q(0,t)=q(1,t)=0,%
\hspace{0.5cm}\forall\mbox{ }t\geq 0 ,\\
\varphi (x,0)=\varphi _{0}(x),\mbox{ }\varphi _{t}(x,0)=\varphi _{1}(x),\hspace{4.8cm}
\forall\mbox{ }x\in (0,1) ,\\
\psi (x,0)=\psi _{0}(x),\mbox{ }\psi _{t}(x,0)=\psi _{1}(x),\hspace{4.8cm}\forall\mbox{ }x\in
(0,1), \\
\theta (x,0)=\theta _{0}(x),\mbox{ }q(x,0)=q_{0}(x),\hspace{5.2cm}\forall\mbox{ }x\in (0,1),
\end{array}
\right.
\label{all}
\end{equation}
and they etablished an explicit and general decay result using a multiplier method for wide classe relaxating function without imposing the usual growth conditions on the frictional damping in both cases when $\chi=0$ and $\chi\neq 0.$

On the other hand, deriving the upper estimates is only a first step and it
needs to be completed by the obtention of the lower estimates. However, very
few is known on lower energy estimates and optimality results. Let us
mention the existing results in this regard. Haraux \cite{Haraux} examined
the case of a one-dimensional wave equation subjected to polynomial globally
distributed dampings, for some initial data in $W^{2,\infty }(\Omega )\times
W^{1,\infty }(\Omega ).$ Haraux proved that
\begin{equation*}
\limsup_{t\rightarrow \infty }(t^{\frac{3}{p-1}}E(t))>0,
\end{equation*}%
where $E(t)$ is the energy associated with the damped wave equation, and,
\begin{equation*}
\limsup_{t\rightarrow \infty }\left( t^{\frac{1}{p-1}}\Vert u_{t}\Vert
_{L^{\infty }(\Omega )}\right) >0,
\end{equation*}%
 where $g$ is a nondecreasing
$\mathcal{C}^1$ function which behaves essentially like $k|s|^r s$ with $k, r > 0$ and the damping term $g(x)$ grows as $x^{p}$ near the origin. Since that
time, this issue retains the attention of many other authors. We also refer
to \cite[Chapter1]{editor} for more details about the stabilization of
wave-like equations.\newline

More precisely, lower energy estimates have been previously studied in the
articles \cite{1}, \cite{2} for the scalar one-dimensional wave equations,
the scalar Petrowsky equations in two-dimensions and $(2\times 2)$ Timoshenko systems.
\newline
Let us also quote the article of Alabau \cite{alabau} for recent studies on strong
lower energy estimates of the strong solutions of nonlinearly damped
Timoshenko beams, Petrowsky equations, in two and three dimensions, and
wave-like equations, in a bounded one-dimensional domain or annulus domains
in two or three dimensions. Note nevertheless that considering the system (%
\ref{1}) makes our lower bound results more general from those considered so far in the
literature.\newline

The main objective of the present paper is to show how the energy $E$ (defined by (%
\ref{energy}) blow) associated with the nonlinearly damped Timoshenko system
of thermoelasticity with second sound (\ref{1}) satisfies the 
stability result. Once we have this stability result, one can use the expression of the energy $\mathcal{E}$ (defined by \ref{EE} blow)and  apply the
comparison principle which allows us to give the strong lower estimates for
the system (\ref{1})  

The rest of the article is organized as follows. We start in Section \ref{sect2} by
giving a brief introduction, then we introduce some notations and material
needed for our work. In Sections \ref{sect3} we state and prove the 
stabilization result for (\ref{1}). Then in Section \ref{sect4} we derive
the lower energy estimates for the Timoshenko system (\ref{1}). %
  Some exemples are given in the last section.

\section{Preliminaries}

\label{sect2}

We formulate the following assumptions that would be required for the
establishment of our results:
$(H_{0})$: we assume that  $a$ is a smooth function and satisfies $a(x) \geq 0$, $x \in  ]0,1[$ , $a > 0$ in a nonempty subset $]0,1[$ of $\omega$;
\begin{equation*}
\hspace{0.3cm}(H_{1})\left\{
\begin{array}{l}
\ g:\mathbb{R}\rightarrow \mathbb{R}\ \mbox{ is a nondecreasing }\ C^{0}-%
\mbox{function} \\
\mbox{
such   that for every }\ \epsilon \in (0,1),%
\mbox{ there exists positive
constants}\ c_{1},c_{2}, \\
\mbox{and an increasing odd function}\ g_{0}\in C^{1}(0,+\infty
),g_{0}(0)=0\ \mbox{ such that} \\
\newline
\displaystyle\left\{
\begin{array}{l}
g_{0}(|(s)|)\leq |g(s)|\leq g_{0}^{-1}(|(s)|),\hspace{2.7em}\mbox{for all}%
\hspace{1.2em}|s|\leq \epsilon , \\
c_{1}|s|\leq |g(s)|\leq c_{2}|s|,\hspace{6.2em}\mbox{ for all}\hspace{1.13em}%
|s|\geq \epsilon .%
\end{array}%
\right.%
\end{array}%
\right.
\end{equation*}%
In addition, we assume that, there exists $r_{0}>0$ such that $\Psi $ is a
strictly convex $\mathcal{C}^{1}-$function from $[0,r_{0}^{2}]$ on to $%
\mathbb{R}$, given by,
\begin{equation}
\Psi (x)=\sqrt{x}g(\sqrt{x}).  \label{psi}
\end{equation}

\begin{remarks} \mbox{ }
\begin{enumerate}
\item The function $\Psi$ defined above is the same function $H$ introduced in \cite{AB1}.
\item In \cite{alabau} Alabau assumed that $g$ is an odd, increasing function and
has a linear growth at infinity. In order to establish here the lower estimates,
the hypotheses in \cite{alabau} are only assumed for the  function $g_0$ and not for $g$.
\end{enumerate}
\end{remarks}

The energy associated with the system (\ref{1}) is defined by
\begin{equation}  \label{energy}
\displaystyle E(\varphi,\psi ,\theta ,q )(t):=\frac{1}{2}\int_{0}^{1}
(\rho_{1}\varphi_{t}^{2}+\rho_{2}\psi _{t}^{2}+b\psi
_{x}^{2}+k(\varphi_{x}+\psi )^{2}+\rho_{3}\theta^{2}+\tau q^{2})dx.
\end{equation}
Differentiating (\ref{energy}) in time, it is easy to see that
\begin{equation}  \label{energyde}
E^{\prime }(t)=-\beta
\int^{1}_{0}q^{2}dx-\int^{1}_{0}a(x)\psi_{t}g(\psi_{t}) dx \leq 0,
\end{equation}
this relationship has been obtained by multiplying, formally, the first fourth
equations of \eqref{1}, respectively, by $\varphi_{t}$, $\psi_{t}$, $\theta $
and $q$, and using the integration by parts with respect to $x$ over $(0,1),$
the boundary and initial conditions, and the hypotheses $(H_{0})$ and $%
(H_{1})$.

Now, we define the function space associated with the problem \eqref{1} by
\begin{equation*}
\mathcal{H}=H_{0}^{1}(0,1)\times L^{2}(0,1)\times H_{0}^{1}(0,1)\times
(L^{2}(0,1))^{3}.
\end{equation*}%
We rewrite  \eqref{1} as a first-order system. For that purpose, let $%
U=(\varphi ;\varphi _{t};\psi ;\psi _{t};\theta ;q)^{T}$ and \eqref{1}
becomes
\begin{equation}
\left\{
\begin{array}{c}
\displaystyle\frac{d}{dt}U(t)+(\mathcal{A}+B)U(t)=0,\hspace{1cm}t>0,\vspace{.2cm}\\
\vspace{%
0.2cm}U(0)=U_{0}=(\varphi _{0},\varphi _{1},\psi _{0},\psi _{1},\theta
_{0},q_{0})\in \mathcal{H},%
\end{array}%
\right.  \label{firstorder}
\end{equation}%
where $\mathcal{A}$ is an unbounded operator from $D(\mathcal{A})$ onto $%
\mathcal{H}$ defined by
\begin{equation}
\mathcal{A}\left(
\begin{array}{c}
\varphi \\
w \\
\psi \\
z \\
\theta \\
q%
\end{array}%
\right) =\left(
\begin{array}{c}
-w \\
-\frac{k}{\rho _{1}}\varphi _{xx}-\frac{k}{\rho _{1}}\psi _{x} \\
-z \\
-\frac{b}{\rho _{2}}\psi _{xx}+\frac{k}{\rho _{2}}(\varphi _{x}+\psi )+\frac{%
\delta }{\rho _{2}}\theta _{x} \\  \\
\frac{1}{\rho _{3}}q_{x}+\frac{\delta }{\rho _{3}}z_{x} \\  \\
\frac{\beta }{\tau }q+\frac{1}{\tau }\theta _{x}%
\end{array}%
\right) .  \label{a*fi}
\end{equation}%
Here,
\begin{equation*}
D(\mathcal{A})=((H^{2}(0,1)\cap H_{0}^{1}(0,1))\times
H_{0}^{1}(0,1))^{2}\times H^{1}(0,1)\times H_{0}^{1}(0,1).
\end{equation*}%
Clearly, $D(\mathcal{A})$ is dense in $\mathcal{H}$.\newline
Let $B$ be the damping nonlinear operator given by
\begin{equation*}
\displaystyle B\left(
\begin{array}{c}
\varphi \\
w \\
\psi \\
z \\
\theta \\
q%
\end{array}%
\right) =\left(
\begin{array}{c}
0 \\
0 \\
0 \\
a(x)g(w) \\
0 \\
0%
\end{array}%
\right) .
\end{equation*}%
Thanks to the theory of maximal nonlinear monotone operators (see \cite{wellp}%
), we have the following existence and uniqueness result (see \cite{ali} for
the proof). \newline

\begin{theorem}
\label{thp} Assume that $(H_{0})$ and $(H_{1})$ are satisfied. Then for all
initial data $U_{0}\in \mathcal{H}$, the system (\ref{1}) has a unique
solution $U\in \mathcal{C}([0,\infty );\mathcal{H})$, the operator $\mathcal{%
A}+B$ generates a continuous semigroup $(\mathcal{T}(t))_{t\geq 0}$ on $%
\mathcal{H}$. Moreover, for all initial data $U_{0}\in D(\mathcal{A})$, the
solution $U\in L^{\infty }([0,\infty );D(\mathcal{A}))\cap W^{1,\infty
}([0,\infty );\mathcal{H}).$
\end{theorem}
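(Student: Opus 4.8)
The plan is to read \eqref{firstorder} as an abstract Cauchy problem governed by the operator $\mathcal{A}+B$, to endow $\mathcal{H}$ with the \emph{energy} inner product, and to prove that $\mathcal{A}+B$ is maximal monotone; all the assertions of the theorem then follow from the nonlinear semigroup theory of Komura and Kato for maximal monotone operators (see \cite{wellp}). Concretely, for $U=(\varphi,w,\psi,z,\theta,q)$ and $\tilde U=(\tilde\varphi,\tilde w,\tilde\psi,\tilde z,\tilde\theta,\tilde q)$ I would work with
\begin{equation*}
\langle U,\tilde U\rangle_{\mathcal{H}}=\int_0^1\Big(\rho_1 w\tilde w+\rho_2 z\tilde z+b\,\psi_x\tilde\psi_x+k(\varphi_x+\psi)(\tilde\varphi_x+\tilde\psi)+\rho_3\theta\tilde\theta+\tau q\tilde q\Big)\,dx ,
\end{equation*}
which, by Poincar\'e's inequality, defines a norm equivalent to the usual one on $\mathcal{H}$.

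First I would establish monotonicity. Computing $\langle(\mathcal{A}+B)U-(\mathcal{A}+B)\tilde U,\,U-\tilde U\rangle_{\mathcal{H}}$ through integration by parts and the boundary conditions \eqref{cb}, the conservative cross terms coming from the Timoshenko coupling and from the $\delta$- and Cattaneo-couplings between $\theta$ and $q$ cancel, and one is left with
\begin{equation*}
\langle(\mathcal{A}+B)U-(\mathcal{A}+B)\tilde U,\,U-\tilde U\rangle_{\mathcal{H}}=\beta\int_0^1(q-\tilde q)^2\,dx+\int_0^1 a(x)\big(g(z)-g(\tilde z)\big)(z-\tilde z)\,dx\ \geq 0 ,
\end{equation*}
which is nonnegative since $\beta>0$, $a\geq 0$ by $(H_0)$, and $g$ is nondecreasing by $(H_1)$. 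This is exactly the operator form of the energy identity \eqref{energyde}.

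The real work, and the step I expect to be the main obstacle, is the range condition $R(I+\mathcal{A}+B)=\mathcal{H}$. I would split its verification. Since $(H_1)$ forces $g$ to have at most linear growth (the bounds $g_0(|s|)\le|g(s)|\le g_0^{-1}(|s|)$ near the origin and $c_1|s|\le|g(s)|\le c_2|s|$ at infinity together give $|g(s)|\le C(1+|s|)$), the map $B:U\mapsto(0,0,0,a(x)g(z),0,0)$ is everywhere defined on $\mathcal{H}$, bounded on bounded sets, continuous (hence hemicontinuous) and monotone. By the standard perturbation principle that a linear $m$-accretive operator plus an everywhere-defined, monotone, bounded and continuous operator is maximal monotone (see \cite{wellp}), it then suffices to prove that the linear part $\mathcal{A}$ is $m$-accretive, i.e. that $I+\mathcal{A}$ is onto $\mathcal{H}$. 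To solve $(I+\mathcal{A})U=F$ for $F=(f_1,\dots,f_6)$ I would eliminate the velocities via $w=\varphi-f_1$, $z=\psi-f_3$ and recast the remaining lines as a variational problem for $(\varphi,\psi,\theta,q)$ in $H_0^1(0,1)\times H_0^1(0,1)\times H^1(0,1)\times H_0^1(0,1)$; the associated bilinear form is bounded and, thanks to the $I$-term together with the positivity of the energy form, coercive, so Lax--Milgram yields a unique weak solution, and elliptic regularity applied line by line (first to $\varphi,\psi$, then to the $\theta$--$q$ pair) upgrades it to $U\in D(\mathcal{A})$. The delicate point is that the $\delta$- and Cattaneo-couplings render the form non-symmetric, so one must arrange the estimates so that these cross terms are absorbed and genuine coercivity survives; this is the crux of the argument. (Alternatively, one may solve the full nonlinear resolvent $(I+\mathcal{A}+B)U=F$ directly by the Browder--Minty surjectivity theorem, the monotonicity of $g$ supplying the needed structure.)

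Having shown that $\mathcal{A}+B$ is maximal monotone with $\overline{D(\mathcal{A}+B)}=\overline{D(\mathcal{A})}=\mathcal{H}$ (note $B$ is everywhere defined, so $D(\mathcal{A}+B)=D(\mathcal{A})$), I would invoke the nonlinear Hille--Yosida theorem: $-(\mathcal{A}+B)$ generates a strongly continuous contraction semigroup $(\mathcal{T}(t))_{t\geq 0}$ on $\mathcal{H}$. For $U_0\in\mathcal{H}$ this gives the unique mild solution $U\in\mathcal{C}([0,\infty);\mathcal{H})$, uniqueness being immediate from the contraction estimate $\|\mathcal{T}(t)U_0-\mathcal{T}(t)V_0\|_{\mathcal{H}}\le\|U_0-V_0\|_{\mathcal{H}}$. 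For $U_0\in D(\mathcal{A})$, the regularity theory for maximal monotone evolutions shows that $t\mapsto U(t)$ is Lipschitz, whence $U\in W^{1,\infty}([0,\infty);\mathcal{H})$, that $U(t)\in D(\mathcal{A})$ with $t\mapsto\|(\mathcal{A}+B)U(t)\|_{\mathcal{H}}$ nonincreasing, and hence, since $\|BU\|_{\mathcal{H}}$ is controlled by the linear growth of $g$, that $U\in L^{\infty}([0,\infty);D(\mathcal{A}))$, which completes the proof.
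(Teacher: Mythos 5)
Your proposal is correct and follows exactly the route the paper itself indicates: the paper does not write out a proof but attributes the result to the theory of maximal nonlinear monotone operators (citing \cite{wellp}, with details deferred to \cite{ali}), and your argument --- energy inner product, monotonicity of $\mathcal{A}+B$ from the dissipation identity, $m$-accretivity of $\mathcal{A}$ via Lax--Milgram plus the standard monotone perturbation lemma for the everywhere-defined Lipschitz-bounded $B$, then the Komura--Kato generation and regularity theorems --- is precisely that proof fleshed out. (Minor remark: the coercivity you flag as the crux is in fact immediate, since the $\delta$- and Cattaneo-cross terms are skew-symmetric and drop out of the quadratic form.)
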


\begin{remark}
As we already mentioned in the indroduction, the exponential decay result  (\ref{Timo6}) depends on the stability number $\chi $  introduced in  \cite{Santos}. 	So, it is natural to wonder about the effects of the nonlinear dissipation mechanism $a(x)g(\psi_t)$ on the stability result of the system \eqref{1}. We recall that,
in  \cite{ali}, the authors considredred the same stability number $\chi$ and obtained a  general decay of the system \eqref{all} with a dissipation term of the form $\alpha(t)h(\psi_t)$ but no  optimality  result has been proved. \\
As a consequence, the following questions naturally arise:
\begin{itemize}
\item[$\bullet$] Is  our system  (\ref{1}) strongly stable? 
\item[$\bullet$] If we obtain a different equilibrum state $(E(t)\rightarrow \text{constant } \neq 0\  \text{as} \ t\rightarrow \infty )$, how can we characterize the decay rate of the energy?
\item[$\bullet$] Can we obtain  lower estimates for the new equilibrum state? 
\end{itemize}
These questions will be investigated in the next sections.
   \end{remark}
\section{ Stability for Timoshenko system}

\label{sect3}  In  this section, we focus on the stability result for the energy. For that purpose, we follow the following steps.\newline
 We consider frist the following  conservative Timoshenko  system:

 \begin{equation}\label{conservation}
\left\{
\begin{array}{l}
\rho _{1}\varphi_{tt}-k(\varphi _{x}+\psi )_{x}=0,\hspace{5.28cm}\mbox{in} \
(0,1)\times \mathrm{I\hskip-2ptR}_{+}, \\
\rho _{2}\psi_{tt}-b\psi _{xx}+k(\varphi _{x}+\psi )=0,\hspace{4.2cm}\mbox{in} \ (0,1)\times \mathrm{I\hskip%
-2ptR}_{+}.
\end{array}%
\right.
\end{equation}
Then, we assume the assumption  below  on the subset $\omega \subset 
]0,1[$,
\begin{equation*}
(HS)\left\{
\begin{array}{c}
\mbox{Let}\ (\varphi ,\psi)\
\mbox{ be a weak solution of \eqref{conservation}}  \\
\mbox{if}\ \psi _{t}\equiv 0\ \mbox{on}\ \omega \ %
\mbox{then}\ (\varphi ,\psi )\equiv (0,0).%
\end{array}%
\right.
\end{equation*}%
The assumption $(HS)$ is extracted from \cite{alabau} and we note that we proceed as in \cite{alabau} to extend the techniques there to our problem.\\
Now, we denote  by $\omega(U_0)$ the $\omega-$limit set of $U_0$ and we consider $Z_0 \in \omega(U_0)$ such that $Z(t)=\mathcal{T}(t)Z_0.$ 
Then we formulate the stability result for the energy of %
\eqref{1} in the following theorem.

\begin{theorem}
\label{TH1} Assume that the hypotheses $(H_0)$ and $(H_1)$ hold. We assume
in addition that $\omega$ satisfies $(HS)$. Then for all $%
U_0=(\varphi_0,\varphi_1,\psi_0,\psi_1,\theta_0,q_0)\in \mathcal{H}$, the
energy $E$ defined by \eqref{energy} corresponding to the solution of %
\eqref{1}, satisfies
\begin{equation}  \label{stab}
 {\lim_{t\rightarrow \infty}E(t,U)=E_{\infty},}
\end{equation}
where $E_{\infty}$ is  the energy of $Z \in \omega (U_0)$.\\
Moreover, under the same assumptions we prove that  the energy $\mathcal{E}(t)$  defined  in \eqref{EE} below, satisfies 
\begin{equation}
\mathcal{E}(t)\rightarrow 0, \hspace{0.3cm} \text{when} \hspace{0.2cm} t\rightarrow \infty.
\end{equation}
\end{theorem}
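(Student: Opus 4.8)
The plan is to combine the monotonicity of the energy with a LaSalle-type invariance argument for the nonlinear contraction semigroup $(\mathcal{T}(t))_{t\ge 0}$ generated by $\mathcal{A}+B$, and then to identify the $\omega$-limit set explicitly by means of the unique continuation hypothesis $(HS)$. First I would note that, by \eqref{energyde} together with $\beta>0$, $a(x)\ge 0$ and $\psi_t g(\psi_t)\ge 0$, the energy $E$ is non-increasing and bounded below by $0$, so the limit $E_\infty:=\lim_{t\to\infty}E(t,U)$ exists. I would also record the one conservation law carried by \eqref{1}: integrating the third equation over $(0,1)$ and using the boundary conditions \eqref{cb}, which give $q(0,t)=q(1,t)=0$ and hence $\psi_t(0,t)=\psi_t(1,t)=0$, shows that $\int_0^1\theta(x,t)\,dx=\int_0^1\theta_0\,dx=:m$ is independent of $t$.

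Next I would set up the $\omega$-limit machinery. For regular data $U_0\in D(\mathcal{A})$, Theorem \ref{thp} gives $U\in L^\infty([0,\infty);D(\mathcal{A}))$, and since $D(\mathcal{A})\hookrightarrow\mathcal{H}$ compactly, the orbit $\{\mathcal{T}(t)U_0:t\ge 0\}$ is precompact in $\mathcal{H}$; consequently $\omega(U_0)$ is nonempty, compact and invariant under $\mathcal{T}(t)$. As $E$ is continuous on $\mathcal{H}$, it is constant on $\omega(U_0)$, equal to $E_\infty$, which is \eqref{stab}; for arbitrary $U_0\in\mathcal{H}$ the conclusion is transported by density of $D(\mathcal{A})$ and the contraction property of $\mathcal{T}(t)$ coming from the maximal monotonicity of $\mathcal{A}+B$.

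The heart of the proof is the identification of $\omega(U_0)$. Fixing $Z_0\in\omega(U_0)$ and writing $Z(t)=\mathcal{T}(t)Z_0=(\varphi,\psi,\theta,q)$, invariance forces $E(Z(t))\equiv E_\infty$, hence $\frac{d}{dt}E(Z(t))=0$; by \eqref{energyde} and the sign conditions this makes both dissipation terms vanish, so $q\equiv 0$ and $a(x)\psi_t g(\psi_t)\equiv 0$, i.e. $\psi_t\equiv 0$ on $\omega$. Putting $q\equiv 0$ in the fourth equation yields $\theta_x\equiv 0$, and since the damping then vanishes identically the pair $(\varphi,\psi)$ solves the conservative system \eqref{conservation} with $\psi_t\equiv 0$ on $\omega$; hypothesis $(HS)$ gives $(\varphi,\psi)\equiv(0,0)$. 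Feeding this into the third equation together with $\theta_x\equiv 0$ and the conservation law forces $\theta\equiv m$, so $\omega(U_0)=\{(0,0,0,0,m,0)\}$ and $E_\infty=\tfrac12\rho_3 m^2$. Because $\omega(U_0)$ is a single point and the orbit is precompact, $\mathcal{T}(t)U_0\to(0,0,0,0,m,0)$ in $\mathcal{H}$; since $\mathcal{E}$ in \eqref{EE} is the energy of the shifted state measured relative to this equilibrium, we obtain $\mathcal{E}(t)=E(t)-E_\infty\to 0$.

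The step I expect to be the main obstacle is the precompactness of the orbit: because the heat flux obeys Cattaneo's law the system is purely hyperbolic and offers no parabolic smoothing, while the damping $a(x)g(\psi_t)$ is both nonlinear and localized through $a$; compactness must therefore be extracted from the $D(\mathcal{A})$-regularity of Theorem \ref{thp} and the compact embedding $D(\mathcal{A})\hookrightarrow\mathcal{H}$, and then carried over to general data by density and contraction. A secondary delicate point is verifying that the vanishing of the dissipation genuinely collapses the thermoelastic coupling onto \eqref{conservation}, which is what makes $(HS)$ applicable.
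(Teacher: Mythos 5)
Your proof is correct and follows essentially the same route as the paper: the Dafermos/LaSalle invariance argument, precompactness of the orbit extracted from the $D(\mathcal{A})$-regularity of Theorem \ref{thp} (the paper carries this out by hand in Lemmas \ref{l1} and \ref{l2}, bounding the second derivatives through the equations and the non-increasing first-order energy $E_{\star}$), and the identification of the $\omega$-limit set via the vanishing of the dissipation together with $(HS)$. Your use of the conservation law $\int_0^1\theta\,dx=\mathrm{const}$ to pin down the limiting temperature as the mean $m$, and the resulting identity $\mathcal{E}(t)=E(t)-E_\infty$, give the second assertion more cleanly than the paper's re-run of the invariance argument for the shifted state, but the substance of the argument is the same.
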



Before showing the proof of Theorem \ref{TH1}, we will state and prove two lemmas which
will be useful to this end. The first lemma below proves the decreasing of
the first order energy.

\begin{lemma}
\label{l1} Let $E_{\star}(t)$ be the energy defined as follows:
\begin{equation}
E_{\star}(t):=\frac{1}{2}\int_{0}^{1}
(\rho_{1}\varphi_{tt}^{2}+\rho_{2}\psi_{tt}^{2}+b\psi
_{tx}^{2}+k(\varphi_{tx}+\psi_t)^{2}+\rho_{3}\theta_t^{2}+\tau q_t^{2})dx.
\end{equation}
Then, $E_{\star}$ is a nonincreasing function. We shall call $E_{\star}(t)$
the first order energy associated with \eqref{1}.
\end{lemma}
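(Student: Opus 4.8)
The plan is to show that $E_\star(t)$ is nonincreasing by identifying $E_\star$ as the natural energy associated with the \emph{differentiated} system, and then applying the same dissipation computation that produced \eqref{energyde} at the level of the original energy $E$. The key observation is that $E_\star(t)$ is obtained from the expression for $E(t)$ in \eqref{energy} by replacing each field by its time derivative: the integrand contains $\rho_1\varphi_{tt}^2$, $\rho_2\psi_{tt}^2$, $b\psi_{tx}^2$, $k(\varphi_{tx}+\psi_t)^2$, $\rho_3\theta_t^2$ and $\tau q_t^2$, which are exactly the terms one gets from $E$ after differentiating $\varphi,\psi,\theta,q$ once in time. So the strategy is: differentiate the system \eqref{1} once with respect to $t$, observe that the new variables $(\varphi_t,\psi_t,\theta_t,q_t)$ solve a system of the same structure, and conclude that $E_\star$ decays by the identical energy identity.

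First I would set $v=\varphi_t$, $\chi=\psi_t$, $\vartheta=\theta_t$, $p=q_t$ and differentiate each of the four equations in \eqref{1} in time. By Theorem~\ref{thp}, for initial data in $D(\mathcal{A})$ the solution has the regularity $U\in L^\infty([0,\infty);D(\mathcal{A}))\cap W^{1,\infty}([0,\infty);\mathcal{H})$, which justifies these manipulations. The first, third and fourth equations are linear, so their time-derivatives are immediate and yield the same equations with $(\varphi,\psi,\theta,q)$ replaced by $(v,\chi,\vartheta,p)$. The only equation carrying the nonlinearity is the second, where differentiating the damping term $a(x)g(\psi_t)$ produces $a(x)g'(\psi_t)\psi_{tt}=a(x)g'(\chi)\chi_t$. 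This is the crucial structural point: since $g$ is nondecreasing by $(H_1)$, we have $g'\ge 0$, so the differentiated damping coefficient $a(x)g'(\psi_t)$ is nonnegative.

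Next I would carry out the energy estimate for the differentiated system exactly as in the derivation of \eqref{energyde}: multiply the four differentiated equations by $v_t=\varphi_{tt}$, $\chi_t=\psi_{tt}$, $\vartheta=\theta_t$ and $p=q_t$ respectively, integrate over $(0,1)$, integrate by parts in $x$, and use the boundary conditions. The coupling terms cancel in the same way they did for $E$, leaving
\begin{equation*}
E_\star'(t)=-\beta\int_0^1 q_t^2\,dx-\int_0^1 a(x)\,g'(\psi_t)\,\psi_{tt}^2\,dx.
\end{equation*}
Because $\beta>0$, $a(x)\ge 0$ by $(H_0)$, and $g'\ge 0$ by the monotonicity in $(H_1)$, both integrals are nonnegative, whence $E_\star'(t)\le 0$.

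I expect the main obstacle to be regularity and justification rather than the algebra. One must verify that $\psi_t$ is regular enough that $g(\psi_t)$ can be differentiated in time and that the resulting weak formulation makes sense; strictly $g\in C^0$ need only be nondecreasing, so $g'$ exists merely as a (nonnegative) distributional derivative or via the monotonicity inequality $(g(\psi_t(t+h))-g(\psi_t(t)))(\psi_t(t+h)-\psi_t(t))\ge 0$. The clean way to handle this is to work with a difference-quotient version of the second equation and pass to the limit, using monotonicity of $g$ to keep the boundary term of the correct sign without ever invoking $g'$ pointwise; this argument should be carried out for $U_0\in D(\mathcal{A})$ and then the conclusion extended by density. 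The boundary terms arising from integration by parts (in particular the traces of $\varphi_{tx}$, $\psi_{tx}$, $\theta_t$ and $q_t$) vanish thanks to the Dirichlet conditions \eqref{cb} differentiated in time, and this should be checked explicitly.
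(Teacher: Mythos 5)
Your proposal is correct and follows essentially the same route as the paper: differentiate the system in time, multiply the resulting equations by $\varphi_{tt}$, $\psi_{tt}$, $\theta_t$, $q_t$, integrate by parts using the differentiated boundary conditions, and conclude $E_\star'(t)=-\beta\int_0^1 q_t^2\,dx-\int_0^1 a(x)g'(\psi_t)\psi_{tt}^2\,dx\le 0$ from $a\ge 0$ and the monotonicity of $g$. Your added caution about justifying $g'$ via difference quotients for merely $C^0$ nondecreasing $g$ is a point the paper passes over by calling the computation formal, so if anything your version is slightly more careful.
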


\begin{proof}
We set $p=\varphi _{t},z=\psi _{t},u=\theta _{t},d=q_{t},$ then we have
\begin{equation}
\left\{
\begin{array}{l}
\rho _{1}p_{t}-k(\varphi _{x}+\psi )_{x}=0,\hspace{5.32cm}\mbox{in} \
(0,1)\times \mathrm{I\hskip-2ptR}_{+}, \\
\rho _{2}z_{t}-b\psi _{xx}+k(\varphi _{x}+\psi )+\delta \theta
_{x}+a(x)g(z)=0,\hspace{1.4cm}\mbox{in} \ (0,1)\times \mathrm{I\hskip%
-2ptR}_{+}, \\
\rho _{3}u+q_{x}+\delta z_{x}=0,\hspace{5.9cm}\mbox{in}\ (0,1)\times
\mathrm{I\hskip-2ptR}_{+}, \\
\tau d+\beta q+\theta _{x}=0,\hspace{6.19cm}\mbox{in}\ (0,1)\times
\mathrm{I\hskip-2ptR}_{+}.%
\end{array}%
\right.
\end{equation}

Differentiating the above equations with respect to time, we obtain
\begin{equation}  \label{12}
\left\{
\begin{array}{l}
\rho _{1}p_{tt}-k(\varphi _{x}+\psi )_{x}=0,\hspace{5.18cm}\mbox{in }\
(0,1)\times \mathrm{I\hskip-2ptR}_{+}, \\
\rho _{2}z_{tt}-b z _{xx}+k(p_x+z) +\delta u_{x}+a(x)g^{\prime }(z)z_t=0,%
\hspace{1cm} \mbox{in }\ (0,1)\times \mathrm{I\hskip-2ptR}_{+}, \\
\rho _{3}u_t+q_{x}+\delta z_x=0,\hspace{5.78cm} \mbox{in }\ (0,1)\times
\mathrm{I\hskip-2ptR}_{+}, \\
\tau d_t+\beta d+u_x=0,\hspace{6cm} \mbox{in }\ (0,1)\times \mathrm{I%
\hskip-2ptR}_{+}. \\
p(0,t)=p(1,t)=z(0,t)=z(1,t)=d(0,t)=d(1,t)=0,\hspace{0.3cm}\forall\mbox{ }%
t\geq 0.%
\end{array}
\right.
\end{equation}

\hspace{0.1cm} We remark that if we formally multiply the equations in %
\eqref{12}, respectively, by $p_{t}$, $z_{t}$, $u $ and $d$, integrate over $%
(0,1)$ and use the integration by parts with respect to $x$, the boundary
conditions, and the hypotheses $(H_{0})$ and $(H_{1})$, we obtain the
following inquality
\begin{equation}  \label{energyde1}
E_{\star}^{\prime }(t)=-\beta \int^{1}_{0}d^{2}dx-\int^{1}_{0}a(x)g^{\prime
}(z)z_t^2 dx \leq 0.
\end{equation}

Thus we deduce that $E_{\star}$ is nonincreasing, hence, we have
\begin{equation*}
E_{\star}(t)\leq E_{\star}(0), \hspace{1cm} \forall \ t\geq 0.
\end{equation*}
\end{proof}

We start by establishing the compactness of the orbit in the following lemma.

\begin{lemma}
\label{l2} For the initial data $U_0=(\varphi_0,\varphi_1,\psi_0,\psi_1,%
\theta_0,q_0)\in D(A)$, the orbit of $U_0$ given by $\gamma(U_0)=\cup_{t\geq
0}\mathcal{T}(t)U_0$ is relatively compact in $\mathcal{H}$.
\end{lemma}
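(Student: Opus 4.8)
The plan is to show that the orbit $\gamma(U_0)$ is bounded in the graph space $D(\mathcal{A})$ and then to invoke the compactness of the embedding $D(\mathcal{A})\hookrightarrow\mathcal{H}$. Since $U_0\in D(\mathcal{A})$, Theorem \ref{thp} guarantees that $U(t)=\mathcal{T}(t)U_0$ remains in $D(\mathcal{A})$ for all $t\ge 0$, so that both the energy $E$ and the first order energy $E_\star$ are well defined along the trajectory. In particular $E_\star(0)$ is finite, because the initial accelerations $\varphi_{tt}(0),\psi_{tt}(0),\theta_t(0),q_t(0)$ are determined from $U_0$ through the equations of \eqref{1} and belong to $L^2(0,1)$ when $U_0\in D(\mathcal{A})$; here one uses that $\psi_1\in H_0^1(0,1)\hookrightarrow L^\infty(0,1)$, so that $g(\psi_1)\in L^2(0,1)$.

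First I would collect the uniform $L^2$ bounds furnished by the two monotonicity properties. From $E(t)\le E(0)$ (inequality \eqref{energyde}) I control $\|\varphi_t\|,\|\psi_t\|,\|\psi_x\|,\|\varphi_x+\psi\|,\|\theta\|$ and $\|q\|$ in $L^2(0,1)$, uniformly in $t$; since $\psi\in H_0^1(0,1)$, the Poincar\'e inequality together with the bound on $\|\varphi_x+\psi\|$ yields bounds on $\|\varphi_x\|$ and then on $\|\varphi\|$. From $E_\star(t)\le E_\star(0)$ (Lemma \ref{l1}) I obtain, again uniformly in $t$, $L^2$ bounds on $\varphi_{tt},\psi_{tt},\psi_{tx},\varphi_{tx}+\psi_t,\theta_t$ and $q_t$; combining with Poincar\'e (recall $\psi_t\in H_0^1(0,1)$) gives $\|\psi_t\|_{H^1}$, and then $\|\varphi_{tx}\|\le\|\varphi_{tx}+\psi_t\|+\|\psi_t\|$, hence bounds on $\|\varphi_t\|_{H^1}$ and $\|q_t\|$.

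Next I would recover the missing spatial derivatives directly from the system, which is where the real work lies. From the fourth equation, $\theta_x=-\tau q_t-\beta q$, so $\|\theta_x\|$ is bounded and $\theta$ stays bounded in $H^1(0,1)$. From the first equation, $\varphi_{xx}=\tfrac{\rho_1}{k}\varphi_{tt}-\psi_x$, giving a uniform bound on $\|\varphi_{xx}\|$ and thus on $\|\varphi\|_{H^2}$. From the second equation, $b\psi_{xx}=\rho_2\psi_{tt}+k(\varphi_x+\psi)+\delta\theta_x+a(x)g(\psi_t)$, where the first three terms on the right are already controlled, and the nonlinear term is handled by noting that $\psi_t$ is bounded in $H^1(0,1)\hookrightarrow L^\infty(0,1)$, hence bounded pointwise; since $g$ is continuous and $a$ is smooth, $a(x)g(\psi_t)$ is then bounded in $L^2(0,1)$ uniformly in $t$. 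This yields a uniform bound on $\|\psi\|_{H^2}$, and altogether $\sup_{t\ge 0}\|U(t)\|_{D(\mathcal{A})}<\infty$.

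Finally, since the embeddings $H^2(0,1)\cap H_0^1(0,1)\hookrightarrow H_0^1(0,1)$, $H^1(0,1)\hookrightarrow L^2(0,1)$ and $H_0^1(0,1)\hookrightarrow L^2(0,1)$ are all compact by the Rellich--Kondrachov theorem, the injection $D(\mathcal{A})\hookrightarrow\mathcal{H}$ is compact; a bounded subset of $D(\mathcal{A})$ is therefore relatively compact in $\mathcal{H}$, which proves that $\gamma(U_0)$ is relatively compact in $\mathcal{H}$. The main obstacle is the uniform control of the nonlinear damping contribution $a(x)g(\psi_t)$ when solving the second equation for $\psi_{xx}$, together with the verification that every constant produced is genuinely independent of $t$; both rely crucially on the nonincrease of the first order energy $E_\star$ established in Lemma \ref{l1} and on the one-dimensional Sobolev embedding $H^1(0,1)\hookrightarrow L^\infty(0,1)$.
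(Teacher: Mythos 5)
Your proof is correct and follows essentially the same route as the paper: both use the uniform bounds coming from the monotonicity of $E$ and of the first order energy $E_\star$ (Lemma \ref{l1}) to recover $L^2$ bounds on $\varphi_{xx}$, $\psi_{xx}$, $\theta_x$ and $q_x$ directly from the equations, and then conclude by Rellich--Kondrachov. Your packaging of the argument as ``the orbit is bounded in $D(\mathcal{A})$ and the embedding $D(\mathcal{A})\hookrightarrow\mathcal{H}$ is compact'' is just a cleaner phrasing of the paper's component-by-component argument, and your explicit treatment of the term $a(x)g(\psi_t)$ via $H^1(0,1)\hookrightarrow L^{\infty}(0,1)$ supplies a detail the paper leaves implicit.
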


\begin{proof}
\mbox{} Thanks to the first equation of \eqref{1}, we have
\begin{equation*}
\varphi _{xx}=-\frac{\rho _{1}}{k}(\varphi _{tt}+\psi _{x}),
\end{equation*}%
and we get
\begin{equation*}
\int_{0}^{1}\varphi _{xx}^{2}dx\leq \left( \frac{\rho _{1}}{k}\right)
^{2}\left( \int_{0}^{1}\varphi _{tt}^{2}dx+\int_{0}^{1}\psi
_{x}^{2}dx\right) .
\end{equation*}%
Using Lemma \ref{l1} which proves that $E_{\star }$ is bounded on $\mathbb{R}%
_{+}$, we deduce that the set $\{\varphi _{tt}(t,\cdot);t\geq 0\}$ is bounded in
$L^{2}(0,1)$. In addition to the fact that $E$ is bounded uniformly on $%
\mathbb{R}_{+}$, we deduce that the set $\{\psi _{x}(t,\cdot);t\geq 0\}$ is also
bounded in $L^{2}(0,1)$.\newline
Applying the Poincare's inequality and the Rellich-Kondrochov
theorem, we obtain that the set
\begin{equation*}
\{\varphi (t,\cdot);t\geq 0\}\ \mbox{ is relatively compact in }\ H_{0}^{1}(0,1).
\end{equation*}%
Thanks to \eqref{energyde1} the energy $E_{\star }$ is bounded in $\mathbb{R}%
_{+}$, then, the set $\{\varphi _{tx}(t,\cdot);t\geq 0\}$ is bounded in $%
L^{2}(0,1)$. Furthermore, we apply the Poincare's inequality
for $\varphi _{t}\in H_{0}^{1}(0,1)$,
\begin{equation*}
\Vert \varphi _{t}\Vert _{H_{0}^{1}(0,1)}\leq (1+c_{p})\Vert \varphi
_{tx}\Vert _{L^{2}(0,1)}.
\end{equation*}%
Hence, we easily obtain that the set $\{\varphi _{t}(t,\cdot);t\geq 0\}$ is
bounded in $H_{0}^{1}(0,1)$ which implies, using the Rellich theorem, that
the set
\begin{equation*}
\{\varphi _{t}(t,\cdot);t\geq 0\}\ \mbox{ is relatively compact in}\ L^{2}(0,1).
\end{equation*}%
From \eqref{1}, we have
\begin{equation*}
\theta _{x}=-\tau q_{t}-\beta q,
\end{equation*}%
and the sets $\{q(t,\cdot),t\geq 0\}$ and $\{q_{t}(t,\cdot),t\geq 0\}$ are bounded
in $L^{2}(0,1)$. Hence, we deduce that $\{\theta _{x}(t,\cdot),t\geq 0\}$ is
bounded in $L^{2}(0,1)$. Moreover, using the equation
\begin{equation*}
b\psi _{xx}=\rho _{2}\psi _{tt}+k(\varphi _{x}+\psi )+\delta \theta
_{x}+a(x)g(\psi _{t}),
\end{equation*}%
and the hypotheses $(H_{0})$ and $(H_{1})$ on $a$ and $g$, we obtain that $%
\{\psi _{xx}(t,\cdot),t\geq 0\}$ is bounded in $L^{2}(0,1)$, $\{\psi (t,\cdot),t\geq
0\}$ is bounded in $H^{2}(0,1)$ and again applying the Rellich-Kondrochov
theorem, we deduce that the set
\begin{equation*}
\{\psi (t,\cdot),t\geq 0\}\ \mbox{ is  relatively compact in }\ H_{0}^{1}(0,1).
\end{equation*}%
Since, we have the set $\{\psi _{t}(t,\cdot),t\geq 0\}$ is bounded in $%
H_{0}^{1}(0,1)$ we easily deduce from the Rellich theorem that
\begin{equation*}
\{\psi _{t}(t,\cdot),t\geq 0\}\ \mbox{is relatively compact in }\ L^{2}(0,1).
\end{equation*}%
Using the fact that $E_{\star }$ is bounded and
\begin{equation*}
q_{x}=-\delta \psi _{tx}-\rho _{3}\theta _{t},
\end{equation*}%
we infer that the $\{q_{x}(t,\cdot),t\geq 0\}$ is bounded in $L^{2}(0,1)$.
Applying the Poincare's inequality, than the set $%
\{q(t,\cdot),t\geq 0\}$ is bounded in $H_{0}^{1}(0,1)$, which infers that
\begin{equation*}
\{q(t,\cdot),t\geq 0\}\ \mbox{is relatively compact in }\ L^{2}(0,1).
\end{equation*}%
Using the fourth equation of \eqref{1}, we deduce that $\{\theta
_{x}(t,\cdot),t\geq 0\}$ is bounded in $L^{2}(0,1)$, as well, $\forall \ t\geq 0$%
, $\theta _{x}(t,\cdot)\in H_{0}^{1}(0,1)$, then $\{\theta _{x}(t,\cdot),t\geq 0\}$
is bounded in $H_{0}^{1}(0,1)$. Therefore we conclude that
\begin{equation*}
\{\theta (t,\cdot),t\geq 0\}\ \mbox{is relatively compact in }\ L^{2}(0,1).
\end{equation*}
\end{proof}

Now, we recall the definition of the $\omega-$limit that we borrow from \cite%
{editor}.

\begin{definition}
\label{wlimit} Let $(\mathcal{T}(t))_{t\geq 0}$ be a continuous semigroup on
a Banach $X$. We recall that the $\omega-$limit set of $z_0$, in $X$, is
defined by
\begin{equation*}
\omega(z_0)=\lbrace z \in X, \exists (t_n)_n \subset [0,\infty)\ \mbox{such that}%
\ t_n\rightarrow \infty, \ \mbox{as}\  n \rightarrow \infty, \ \mbox{and} \ z=\lim_{n\rightarrow \infty} \mathcal{T}(t_n)z_0
\rbrace.
\end{equation*}
\end{definition}

Now, we are ready to give the proof of Theorem \ref{TH1}.

\begin{proof}
[Proof of Theorem \ref{TH1}]\mbox{}\\
We aim to apply the Dafermos strong stabilization technique based on Lasalle
invariance principle (see Proposition 1.3.6 in \cite{editor}).\newline
For that purpose, let $U_{0}=(\varphi _{0},\varphi
_{1},\psi _{0},\psi _{1},\theta _{0},q_{0})\in D(\mathcal{A})$, and $%
U=(\varphi ,p,\psi ,z,\theta ,q)=\mathcal{T}(t)U_{0}.$ Then, we define the
Liapunov function $L$ for $(\mathcal{T}(t))_{t\geq 0}$ on $\mathcal{H}$ by
\begin{equation*}
\displaystyle L(U)=\frac{1}{2}\int_{0}^{1}(\rho _{1}p^{2}+\rho
_{2}z^{2}+b\psi _{x}^{2}+k(\varphi _{x}+\psi )^{2}+\rho _{3}\theta ^{2}+\tau
q^{2})dx,\hspace{0.3cm}\forall \ U\in \mathcal{H}.
\end{equation*}%
Now, let $\omega (U_{0})$ be the $\omega -$limit of $U_{0}$ (see
Definition \ref{wlimit}). Thanks to the Lasalle invariance principle,
we show that for each $W_{0}$ in $\omega (U_{0})$, the function $%
t\rightarrow L(\mathcal{T}(t)W_{0})$ is constant. In particular, let $%
Z_{0}\in \omega (U_{0})$ be given and we set $Z(t)=(w,r,z,p,u,\eta )(t)=%
\mathcal{T}(t)Z_{0}.$ 
Since $L(Z(\cdot))$ is constant, we deduce that $(w,z,u,\eta )$ is a solution of
a conservative system. 
Then,  the dissipation inequality will be equal to zero which yields
\begin{equation*}
-\beta \int_{0}^{1}\eta ^{2}dx-\int_{0}^{1}a(x)pg(p)dx=0\Rightarrow \eta
\equiv 0\ \mbox{and}\ a(x)g(p)=0,\forall \ x\in \ (0,1),\forall \ t\in \mathbb{%
R}_{+}.
\end{equation*}%
Hence, the conservative  system can be written as follows:
\begin{equation}
\left\{
\begin{array}{l}
\rho _{1}w_{tt}-k(w_{x}+z)_{x}=0,\hspace{5.97cm}\mbox{in} \ (0,1)\times
\mathrm{I\hskip-2ptR}_{+}, \\
\rho _{2}z_{tt}-bz_{xx}+k(w_{x}+z)=0,\hspace{5.1cm}\mbox{in}\ (0,1)\times
\mathrm{I\hskip-2ptR}_{+}, \\
\rho _{3}u_{t}+\delta p_{x}=0,\hspace{7.4cm}\mbox{in}\ (0,1)\times
\mathrm{I\hskip-2ptR}_{+}, \\
u_x=0, \hspace{8.8cm}\mbox{in}\ (0,1)\times
\mathrm{I\hskip-2ptR}_{+}, \\
z_{t}=0,\ \mbox{on}\ \{x\in \Omega ,a(x)\neq 0\}\supset \omega .%
\end{array}%
\right.   \label{newsys}
\end{equation}%
This yields,

\begin{equation}
\left\{
\begin{array}{l}
\rho _{1}w_{tt}-k(w_{x}+z)_{x}=0,\hspace{5.96cm}\mbox{in}\ (0,1)\times
\mathrm{I\hskip-2ptR}_{+}, \\
\rho _{2}z_{tt}-bz_{xx}+k(w_{x}+z)=0,\hspace{5.07cm}\mbox{in} \ (0,1)\times
\mathrm{I\hskip-2ptR}_{+}, \\
u_t=0, \hspace{8.8cm} 	\mbox{in} \ (0,1)\times \mathrm{I\hskip-2ptR}_{+},\\
u_x=0, \hspace{8.7cm} 	\mbox{in} \ (0,1)\times \mathrm{I\hskip-2ptR}_{+},\\
q=0, \hspace{9cm} 	\mbox{in} \ (0,1)\times \mathrm{I\hskip-2ptR}_{+},\\
z_{t}=0\ \{x\in \Omega ,a(x)\neq 0\}\supset \omega.\\
\end{array}%
\right.   \label{newsys0}
\end{equation}%
as well as, we can infer from \eqref{newsys0} that,

\begin{equation}
\left\{
\begin{array}{l}
\rho _{1}w_{tt}-k(w_{x}+z)_{x}=0,\hspace{5.96cm}\mbox{in}\ (0,1)\times
\mathrm{I\hskip-2ptR}_{+}, \\
\rho _{2}z_{tt}-bz_{xx}+k(w_{x}+z)=0,\hspace{5.1cm}\mbox{in} \ (0,1)\times
\mathrm{I\hskip-2ptR}_{+}, \\
u=c=\theta_0(0) \hspace{7.8cm} 	\mbox{in} \ (0,1)\times \mathrm{I\hskip-2ptR}_{+},\\
q=0, \hspace{9cm} 	\mbox{in} \ (0,1)\times \mathrm{I\hskip-2ptR}_{+}.\\
z_{t}=0\ \{x\in \Omega ,a(x)\neq 0\}\supset \omega .\\
\end{array}%
\right.   \label{newsys1}
\end{equation}%

Using the assumption $(HS)$, we have that $(w,z)= (0,0)$. This allows us to identify $Z(t)$ the element   of $ \omega(U_0)$  in the form  $Z(t)=(0,0,\theta_0(0),0).$
 Hence we conclude that,
\begin{equation*}
\lim_{t\rightarrow \infty} E(t,U_0)=E(Z)=E_{\infty}, \hspace{1cm} \forall\  U_0 \  \in D(\mathcal{A}).
\end{equation*}%
Indeed, since $D(\mathcal{A})$ is dense in $\mathcal{H}$, we obtain 
\begin{equation}
\lim_{t\rightarrow \infty} E(t,U)= E_{\infty}.
\end{equation}

Moreover,
since $\mathcal{E}$ is the energy of the difference between the solution $U \in \mathcal{H}$ and $Z=(0,0,\theta_0(0),0)  \in \omega(U_0),$ we obtain  
\begin{equation}\label{EE}
\mathcal{E}(t)=E(t,(\varphi,\psi,\theta-\theta_0(0),q))= \frac{1}{2} \int_0^1 \rho_1 {\varphi_t}^2+ \rho_2 {\psi_t}^2+b {\psi_x}^2+k(\varphi_x+\psi)^2+\rho_3 (\theta- \theta_0(0))^2 +\tau q^2 dx.
\end{equation} 
Thanks to the dissipation inequality \eqref{energyde} and  \eqref{psi},
we have
\begin{equation*}
\mathcal{E}^{\prime }(t,U)=-\beta \int_{0}^{1}q^{2}(t,x)dx-\int_{0}^{1}a(x)\psi_t ^{2}(t,x)%
\tilde{\Psi}(\psi_t ^{2}(t,x))dx.
\end{equation*}%
We assume that $ \mathcal{E}^{\prime }(t)=0$, $\forall \ t\geq 0$ and that  the hypothesis $(HS)$ holds, we obtain the following system 
\begin{equation}
\left\{
\begin{array}{l}
\rho _{1}w_{tt}-k(w_{x}+z)_{x}=0,\hspace{5.96cm}\mbox{in}\ (0,1)\times
\mathrm{I\hskip-2ptR}_{+}, \\
\rho _{2}z_{tt}-bz_{xx}+k(w_{x}+z)=0,\hspace{5cm}\mbox{in} \ (0,1)\times
\mathrm{I\hskip-2ptR}_{+}, \\
u-\theta_0(0)=0 \hspace{7.7cm} 	\mbox{in} \ (0,1)\times \mathrm{I\hskip-2ptR}_{+}\\
q=0, \hspace{8.9cm} 	\mbox{in} \ (0,1)\times \mathrm{I\hskip-2ptR}_{+},\\
u_t=0, \hspace{8.7cm} 	\mbox{in} \ (0,1)\times \mathrm{I\hskip-2ptR}_{+},\\
u_x=0, \hspace{8.6cm} 	\mbox{in} \ (0,1)\times \mathrm{I\hskip-2ptR}_{+},\\
z_{t}=0\ \{x\in \Omega ,a(x)\neq 0\}\supset \omega .\\
\end{array}%
\right.   \label{newsys11}
\end{equation}%
Then for this case the set $\omega(U_0-Z)=\lbrace (0,0,0,0)\rbrace.$  Applying  the Deformos'strong stabilisation technique as before, we obtain  that  
\begin{equation}
\lim_{t\rightarrow \infty}\mathcal{E}(t)=0 
\end{equation} 
\end{proof}

A straightforward consequence of the stabilisation result given by Theorem %
\ref{TH1} is stated in the following lemma.

\begin{lemma}
\label{lama1}{ For any $r_0>0$, there exists $T_0 > 0$ such that\newline
\begin{equation}\label{lamaaa}
\mathcal{E}(t)\leq \left(\frac{r_0^2}{\mathcal{\gamma}}\right)^2, \hspace{1cm} \forall
\ t\geq T_0.
\end{equation}}
\end{lemma}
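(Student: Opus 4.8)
The plan is to derive \eqref{lamaaa} as a direct quantitative consequence of the convergence $\mathcal{E}(t)\to 0$ established in Theorem \ref{TH1}. The key observation is that Lemma \ref{lama1} asks for nothing more than the elementary fact that a function converging to zero eventually drops below any fixed positive threshold. Here the threshold is $\left(\tfrac{r_0^2}{\gamma}\right)^2$, which is a strictly positive constant once $r_0>0$ is fixed (and $\gamma$ is the fixed positive constant appearing in the comparison setup). First I would invoke Theorem \ref{TH1}, which gives $\lim_{t\to\infty}\mathcal{E}(t)=0$ for the energy $\mathcal{E}$ defined in \eqref{EE}.

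Next I would translate this limit into the $\varepsilon$--$T$ language. Fix $r_0>0$ and set $\varepsilon:=\left(\tfrac{r_0^2}{\gamma}\right)^2>0$. By the definition of the limit, there exists $T_0>0$ such that $\mathcal{E}(t)<\varepsilon$ for all $t\geq T_0$; in particular $\mathcal{E}(t)\leq \varepsilon$ for all $t\geq T_0$, which is exactly \eqref{lamaaa}. The monotonicity of $\mathcal{E}$ is a useful supplementary remark: since $\mathcal{E}'(t)\le 0$ (the dissipation inequality for $\mathcal{E}$ derived in the proof of Theorem \ref{TH1} shows $\mathcal{E}$ is nonincreasing), once $\mathcal{E}$ falls below $\varepsilon$ it stays below, so the bound for all $t\ge T_0$ is automatic and one need only locate the first crossing time.

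There is no substantial obstacle in this argument; it is purely a restatement of convergence as an eventual bound, and the entire content has already been supplied by Theorem \ref{TH1}. The only point requiring mild care is notational consistency: the statement writes $\tfrac{r_0^2}{\mathcal{\gamma}}$, so I would clarify that $\gamma$ denotes the fixed positive constant entering the energy comparison framework (the same $\gamma$ used to normalize $\mathcal{E}$ for the comparison principle in Section \ref{sect4}), ensuring that $\left(\tfrac{r_0^2}{\gamma}\right)^2$ is a genuine positive constant depending only on $r_0$. With that understood, the lemma follows immediately and serves its intended role, namely to guarantee that for large times $\mathcal{E}(t)$ lies in the range $[0,r_0^2]$ on which the convexity hypotheses on $\Psi$ in $(H_1)$ and the function $\Psi(x)=\sqrt{x}\,g(\sqrt{x})$ from \eqref{psi} are available, thereby preparing the application of Alabau-Boussouira's comparison principle in the subsequent lower-estimate arguments.
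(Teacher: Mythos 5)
Your proof is correct and follows essentially the same route as the paper: both deduce \eqref{lamaaa} from the convergence $\mathcal{E}(t)\to 0$ supplied by Theorem \ref{TH1}. If anything, your $\varepsilon$--$T_0$ formulation is the cleaner reading of the lemma as stated, since the paper's own proof instead restricts to initial data satisfying $\mathcal{E}(0)\leq \left(\frac{r_0^2}{\gamma}\right)^2$ and appeals to uniform boundedness, a restriction your argument (and the monotonicity remark you add, via $\mathcal{E}'(t)=E'(t,U)\leq 0$) makes unnecessary.
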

\begin{proof}
Using the strong stability result given by Theorem \ref{TH1}, the energy $\mathcal{E}(t)$ converges to $0$ when $t$ tends to $\infty$. Then, energy $\mathcal{E}$ is bounded uniformly on $\mathbb{R}^{+}$.
In particular, we take the initial data $(\varphi_0,\varphi_1,\psi_0,\psi_1,\theta_0,q_0)$ such that $\mathcal{E}(0)\leq \left(\frac{r_0^2}{\gamma}\right)^2$, where $\gamma$ is defined later in \eqref{gamma}. Hence, we deduce \eqref{lamaaa}.
\end{proof}

\section{Lower energy estimates}

\label{sect4} 
 The aim of this section is to establish a lower bound of
the energy of the one-dimensional nonlinearly damped Timoshenko system with
thermoelasticity and also to prove that the method based on the comparison
principle, expressed through the energy of the solutions, can be extended to
our case.\newline
First, we define \textcolor{red}{(as in \cite{AB1})} the function $\Lambda $ as follows
\begin{equation}
\Lambda (x)=\frac{\Psi (x)}{x\Psi ^{\prime }(x)},  \label{gamma}
\end{equation}%
and we introduce the following assumption (which is the hypothesis (H2) in \cite{alabau})
\begin{equation*}
\hspace{0.3cm}(H_{2})\left\{
\begin{array}{l}
\exists r_{0}>0\ \mbox{such that the function }\ \Psi :[0,r_{0}^{2}]\mapsto
\mathbb{R}\ \mbox{defined by }(\ref{psi}) \\
\mbox{is strictly convex on }\ [0,r_{0}^{2}], \\
\mbox{and either}\ 0<\liminf_{x\rightarrow 0}\Lambda (x)\leq
\limsup_{x\rightarrow 0}\Lambda (x)<1, \\
\mbox{or there exists}\ \mu >0\ \mbox{such that} \\
0<\liminf_{x\rightarrow 0}\left( \frac{\Psi (\mu x)}{\mu x}\int_{x}^{z_{1}}%
\frac{1}{\Psi (y)}dy\right) ,\ \mbox{and}\ \limsup_{x\rightarrow 0}\Lambda
(x)<1, \\
\mbox{for some}\ z_{1}\in (0,z_{0}] \ \mbox{and for all} \ z_0>0.%
\end{array}%
\right.
\end{equation*}%
Then, we state in the sequel our main result.

\begin{theorem}
\label{TH3} Assume that $(H_0)$, $(H_1)$ and $(H_2)$ hold. Then for all non
vanishing smooth initial data, there exist $T_0 > 0$ and $T_1 > 0$ such that
the energy $\mathcal{E}$ of \eqref{1} satisfies the following lower estimate
\begin{equation}  \label{lower2}
\frac{1}{\mathcal{\gamma}^2 C_{\sigma}^2}\left( \Psi^{\prime -1}\left(\frac{1%
}{t-T_0}\right)\right)^2\leq \mathcal{E}(t), \hspace{1cm} \forall \ t\geq T_1+T_0.
\end{equation}
\end{theorem}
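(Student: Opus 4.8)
The plan is to apply the energy comparison principle of Alabau--Boussouira \cite{2, AB1, AB2} to the shifted energy $\mathcal{E}$ of \eqref{EE}, whose dissipation law was already obtained in the proof of Theorem \ref{TH1},
\begin{equation*}
\mathcal{E}'(t)=-\beta\int_0^1 q^2(t,x)\,dx-\int_0^1 a(x)\,\psi_t(t,x)\,g(\psi_t(t,x))\,dx .
\end{equation*}
Since a lower bound on $\mathcal{E}$ is precisely the statement that the dissipation cannot be too large, the whole argument reduces to producing a sharp \emph{upper} bound $-\mathcal{E}'(t)\le\Theta(\mathcal{E}(t))$ with a comparison function $\Theta$ built from $\Psi'$, and then integrating this scalar differential inequality. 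The nonvanishing, smooth hypothesis on the data serves two purposes: smoothness places $U_0\in D(\mathcal{A})$ so that the first-order energy $E_\star$ of Lemma \ref{l1} is finite, and non-triviality guarantees $\mathcal{E}(T_0)>0$ so that the resulting lower bound is meaningful.

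First I would localize the trajectory inside the convex regime. By Lemma \ref{lama1}, choosing the data with $\mathcal{E}(0)\le(r_0^2/\gamma)^2$ forces $\mathcal{E}(t)\le(r_0^2/\gamma)^2$ for all $t\ge T_0$; after the normalization by $\gamma$ this keeps the argument $\psi_t^2$ of $\Psi$ within $[0,r_0^2]$, where, by $(H_2)$, $\Psi$ is strictly convex and the ratio $\Lambda=\Psi/(x\Psi')$ is bounded away from $0$ and from $1$. This is what legitimizes all the convexity manipulations that follow.

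The heart of the proof is the dissipation estimate. Writing $\psi_t g(\psi_t)=\Psi(\psi_t^2)$ and using the tangent-line inequality $\Psi(s)\le s\Psi'(s)$ (valid for the convex $\Psi$ with $\Psi(0)=0$, equivalently $\Lambda\le 1$), together with the two-regime growth hypothesis $(H_1)$ on $g$, the boundedness of the weight $a$, and the control $\beta\int_0^1 q^2\le C\mathcal{E}$ of the thermal dissipation, I would bound $-\mathcal{E}'(t)$ from above by a quantity of the type $C_\sigma^2\gamma^2\,\Psi'\big(c\,\mathcal{E}(t)\big)$. The definition \eqref{gamma} of $\Lambda$ and the two alternatives of $(H_2)$ are tailored precisely so that this differential inequality can be integrated into the announced profile: separating variables in $-\mathcal{E}'\le\Theta(\mathcal{E})$ and integrating on $[T_0,t]$ yields $G(\mathcal{E}(t))\ge G(\mathcal{E}(T_0))-(t-T_0)$ with $G'=1/\Theta$, while the conditions on $\liminf_{x\to0}\Lambda$ and $\limsup_{x\to0}\Lambda$ force $G$ to behave like $-1/\Psi'(\sqrt{\,\cdot\,})$ near the origin, so that inverting $G$ produces exactly \eqref{lower2} for $t\ge T_0+T_1$.

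I expect the main obstacle to be this dissipation estimate with the correct convexity profile rather than a lossy linear one: replacing $\Psi'(\psi_t^2)$ by the crude constant $\Psi'(r_0^2)$ would only give an exponential lower bound and would discard all information on the superlinear behavior of $g$ near $0$. Keeping track of how genuinely small $\Psi'(\psi_t^2)$ is when $\mathcal{E}$ is small forces one to combine the pointwise convexity inequality with Jensen's inequality against the measure $a(x)\,dx$ and with the uniform bound on $E_\star$ from Lemma \ref{l1} (needed to absorb the contribution of the set where $|\psi_t|$ is not small). Carrying assumption $(H_2)$, through its two regimes for $\Lambda$, all the way into the explicit inverse-of-$\Psi'$ profile, and tracking the constants $\gamma$ and $C_\sigma$ through that reduction, is the technically delicate part.
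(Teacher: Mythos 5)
Your overall strategy is the paper's: bound the dissipation from above by a function of $\mathcal{E}$, integrate the resulting scalar differential inequality (this is exactly the paper's function $\mathcal{K}(\chi)=\int_\chi^{\gamma\sqrt{\mathcal{E}(T_0)}}dy/\Psi(y)$, i.e.\ your $G$), and then use $(H_2)$ to pass from $\mathcal{K}^{-1}$ to the explicit profile $(\Psi')^{-1}(1/(t-T_0))$ --- the paper packages this last conversion as the ODE comparison Lemma~\ref{lama55} applied with $H=\Psi$, $\kappa=\sigma$, $z_0=\gamma\sqrt{\mathcal{E}(T_0)}$. Your reading of the roles of Lemma~\ref{lama1} and of the smoothness of the data is also correct. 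However, two of the steps as you describe them would not go through. First, Jensen's inequality for the convex $\Psi$ against the measure $a(x)\,dx$ bounds $\int_0^1 a\,\Psi(\psi_t^2)\,dx$ from \emph{below} by $\Psi$ of an average; you need an \emph{upper} bound. The paper gets it instead from the one-dimensional interpolation $\|\psi_t(t,\cdot)\|_{L^\infty}^2\le 2\|\psi_t\|_{L^2}\|\psi_{tx}\|_{L^2}\le\gamma\sqrt{\mathcal{E}(t)}$ (this is where the uniform bound on $E_\star$ from Lemma~\ref{l1} enters, not to ``absorb a bad set'') combined with the monotonicity of $\tilde\Psi(x)=\Psi(x)/x$ near $0$.

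Second, and more seriously, your treatment of the thermal term stops at $\beta\int_0^1 q^2\le \tfrac{2\beta}{\tau}\mathcal{E}(t)$. This bound is linear in $\mathcal{E}$, and when $g$ is superlinear near the origin (e.g.\ $g(s)=s^p$, $p>1$, where $\sqrt{\mathcal{E}}\,\Psi(\gamma\sqrt{\mathcal{E}})\sim\mathcal{E}^{(p+3)/4}\ll\mathcal{E}$) the linear term dominates the frictional one near the equilibrium; feeding $-\mathcal{E}'\le C\mathcal{E}$ into the comparison argument produces only the exponential lower bound you explicitly say you want to avoid, not \eqref{lower2}. The missing ingredient is a \emph{lower} bound on $g$ in the range $[0,r_0]$, which the paper extracts from $(H_1)$ through the two-case discussion \eqref{case1}--\eqref{case2} (linear versus nonlinear $g_0$), yielding $\mathcal{E}(t)\le \tfrac{r_0}{C_1\gamma}\sqrt{\mathcal{E}(t)}\,\Psi(\gamma\sqrt{\mathcal{E}(t)})$ and hence allowing the thermal dissipation to be absorbed into the same $\Psi$-profile with the constant $\sigma=\tfrac{\alpha_a}{\rho_2}+\tfrac{\beta r_0}{\tau C_1}$. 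Without that step the two pieces of the dissipation cannot be merged into a single comparison function, and the claimed rate does not follow. (A minor bookkeeping point: the comparison function is $\Theta(\mathcal{E})=\tfrac{2\sigma}{\gamma}\sqrt{\mathcal{E}}\,\Psi(\gamma\sqrt{\mathcal{E}})$, an antiderivative of whose reciprocal is $\mathcal{K}(\gamma\sqrt{\cdot})$; it is not of the form $\Psi'(c\,\mathcal{E})$, although $(H_2)$ makes the two comparable at the very end.)
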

\begin{remark}
 {The result of Theorem \ref{TH3} holds true without any assumption on the wave speeds corresponding to the first two equations in \eqref{1}, see e.g \cite{1, editor, alabau}}. 
\end{remark}
The proof of Theorem \ref{TH3} relies on the next proposition together with
Lemma \ref{lama55} which is proved in \cite[Lemma 2.4]{2} and based on the approach of \cite{AB1}. We reproduce here the details for the sake of completeness.
\begin{proposition}
\label{TH2} Let
\begin{equation*}
U_0=(\varphi_0,\varphi_1,\psi_0,\psi_1,\theta_0,q_0)\in D(A).
\end{equation*}
We assume that the hypotheses of Theorem \ref{thp} hold and $%
\lim_{t\rightarrow \infty} \mathcal{E}(t) = 0$. Moreover, we assume that
\begin{equation}  \label{r}
\tilde{\Psi}(x)= \frac{\Psi(x)}{x},\hspace{0.5cm} \tilde{\Psi}(0)=0, \hspace{%
0.2cm} \forall \ x>0,
\end{equation}
where $\Psi$ is a nondecreasing function on $[0, r_0^2]$ for $r_0 > 0$ sufficiently small.%
\newline
Then there exists $T_0 \geq 0$, depending on $E_1(0)$ such that, defining $%
\mathcal{K}$ by
\begin{equation}
\mathcal{K}(\chi)=\int_{\chi}^{\mathcal{\gamma} \sqrt{\mathcal{E}(T_0)}}\frac{1}{%
\Psi(y)}dy,\hspace{1cm} \chi \in (0, \mathcal{\gamma}\sqrt{\mathcal{E}(T_0)}),
\end{equation}
the energy $\mathcal{E}$ satisfies the following lower estimate

\begin{equation}  \label{13}
\frac{1}{\mathcal{\gamma}^2} \left(\mathcal{K}^{-1}(\sigma (t-T_0)
\right)^2\leq \mathcal{E}(t).
\end{equation}
Here, $\sigma$ is a positive constant given by $\sigma= \frac{
\alpha_a}{\rho_2}+ \frac{\beta r_0}{\tau C_1},$ where $\alpha_{a}$ and $%
\mathcal{\gamma}$ are defined, respectively, by \eqref{alpha1} and %
\eqref{gt} below. \newline
Moreover, if $\lim_{\chi\rightarrow 0^{+}} \mathcal{K}(\chi)=\infty$, then
\begin{equation*}
\lim_{t\rightarrow \infty} \mathcal{K}^{-1}(\sigma (t-T_0))=0.
\end{equation*}
\end{proposition}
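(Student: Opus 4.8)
The plan is to turn the desired lower bound into an upper bound on the dissipation rate of $\mathcal{E}$ and then integrate a scalar differential inequality. Since $\mathcal{E}$ and $E$ differ only by the constant shift $\theta\mapsto\theta-\theta_0(0)$, the dissipation identity \eqref{energyde} holds verbatim for $\mathcal{E}$, and by \eqref{psi}--\eqref{r} it reads $\mathcal{E}'(t)=-\beta\int_0^1 q^2\,dx-\int_0^1 a(x)\,\Psi(\psi_t^2)\,dx\le 0$; moreover $\mathcal{E}\to 0$ by hypothesis. Because $\Psi>0$ on $(0,r_0^2]$, the map $\mathcal{K}$ is continuous and strictly decreasing on $(0,\gamma\sqrt{\mathcal{E}(T_0)})$, hence invertible onto its range. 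Using that $\mathcal{K}$ is decreasing and that $\mathcal{E}(t)\le\mathcal{E}(T_0)$ keeps $\gamma\sqrt{\mathcal{E}(t)}$ in the admissible interval, the claimed estimate \eqref{13} is equivalent to
\[
\mathcal{K}\bigl(\gamma\sqrt{\mathcal{E}(t)}\bigr)\le \sigma\,(t-T_0),\qquad t\ge T_0,
\]
so it suffices to prove this last inequality.

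To this end I would set $G(t):=\mathcal{K}(\gamma\sqrt{\mathcal{E}(t)})$, observe that $G(T_0)=0$, and differentiate. Using the definition of $\mathcal{K}$ and $\mathcal{E}'\le 0$,
\[
G'(t)=\frac{\gamma\,|\mathcal{E}'(t)|}{2\sqrt{\mathcal{E}(t)}\;\Psi\bigl(\gamma\sqrt{\mathcal{E}(t)}\bigr)}.
\]
Thus $G(t)\le\sigma(t-T_0)$ will follow by integration from $T_0$ once I establish the pointwise-in-time \emph{key estimate}
\[
|\mathcal{E}'(t)|=\beta\int_0^1 q^2\,dx+\int_0^1 a(x)\,\Psi(\psi_t^2)\,dx\;\le\;\frac{2\sigma}{\gamma}\,\sqrt{\mathcal{E}(t)}\;\Psi\bigl(\gamma\sqrt{\mathcal{E}(t)}\bigr),
\]
which gives exactly $G'(t)\le\sigma$.

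The key estimate is the heart of the argument and I expect it to be the main obstacle. Its first ingredient is a uniform pointwise bound $\psi_t(t,\cdot)^2\le\gamma\sqrt{\mathcal{E}(t)}$, which I would obtain from the one-dimensional interpolation inequality $\|\psi_t\|_{L^\infty}^2\le C\|\psi_t\|_{L^2}\|\psi_{tx}\|_{L^2}$ together with the boundedness of the first order energy $E_\star$ (Lemma \ref{l1}), controlling $\|\psi_{tx}\|_{L^2}$, and the bound $\rho_2\|\psi_t\|_{L^2}^2\le 2\mathcal{E}$; this is precisely what fixes the constant $\gamma$, and explains its dependence on the first order energy. Lemma \ref{lama1} then guarantees $\gamma\sqrt{\mathcal{E}(t)}\le r_0^2$ for $t\ge T_0$, so that $\psi_t^2$ lies in the convexity interval $[0,r_0^2]$. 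Since $\Psi$ is convex with $\Psi(0)=0$, the function $\tilde{\Psi}=\Psi/x$ is nondecreasing, whence $\Psi(\psi_t^2)=\psi_t^2\,\tilde{\Psi}(\psi_t^2)\le\psi_t^2\,\tilde{\Psi}(\gamma\sqrt{\mathcal{E}})$; integrating against $a$ and using $\int_0^1 a\psi_t^2\,dx\le\|a\|_\infty\tfrac{2}{\rho_2}\mathcal{E}$ bounds the damping term by $\tfrac{2\alpha_a}{\gamma\rho_2}\sqrt{\mathcal{E}}\,\Psi(\gamma\sqrt{\mathcal{E}})$. The thermal term $\beta\int_0^1 q^2\,dx$ must then be fitted into the same $\sqrt{\mathcal{E}}\,\Psi(\gamma\sqrt{\mathcal{E}})$ template, at the cost of the constant $\tfrac{\beta r_0}{\tau C_1}$; adding the two contributions produces $\sigma=\tfrac{\alpha_a}{\rho_2}+\tfrac{\beta r_0}{\tau C_1}$. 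This thermal step is the delicate one, because $\Psi$ may be superlinear near $0$ while $\int q^2$ is only directly comparable to $\mathcal{E}$, so the coupling constant $C_1$ and the threshold $r_0$ have to be used carefully to avoid losing the estimate as $\mathcal{E}\to0$.

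Finally, integrating $G'\le\sigma$ over $[T_0,t]$ and using $G(T_0)=0$ yields $\mathcal{K}(\gamma\sqrt{\mathcal{E}(t)})\le\sigma(t-T_0)$; applying the decreasing inverse $\mathcal{K}^{-1}$ and squaring gives \eqref{13}. For the last assertion, if $\lim_{\chi\to0^+}\mathcal{K}(\chi)=\infty$ then $\mathcal{K}$ maps $(0,\gamma\sqrt{\mathcal{E}(T_0)})$ onto $(0,\infty)$, so $\mathcal{K}^{-1}$ is defined and decreasing on all of $(0,\infty)$ with $\mathcal{K}^{-1}(s)\to0$ as $s\to\infty$; taking $s=\sigma(t-T_0)\to\infty$ gives $\mathcal{K}^{-1}(\sigma(t-T_0))\to0$ as $t\to\infty$.
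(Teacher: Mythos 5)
Your overall strategy coincides with the paper's: the pointwise bound $\psi_t^2(t,x)\le\gamma\sqrt{\mathcal{E}(t)}$ obtained from the boundedness of the first order energy $E_\star$, the monotonicity of $\tilde\Psi$ on $[0,r_0^2]$ to control the frictional term by $\tfrac{2\alpha_a}{\gamma\rho_2}\sqrt{\mathcal{E}}\,\Psi(\gamma\sqrt{\mathcal{E}})$, and the integration of the resulting scalar differential inequality into $\mathcal{K}(\gamma\sqrt{\mathcal{E}(t)})\le\sigma(t-T_0)$ (your function $G$ is just a tidy repackaging of that last step). The identity $\mathcal{E}'=E'$ also needs the small verification that $\tfrac{d}{dt}\int_0^1\theta\,dx=0$, which follows from the third equation of \eqref{1} and the boundary condition on $q$; you assert it without this check, but that is minor.

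The genuine gap is the thermal term. You correctly single out $\beta\int_0^1 q^2\,dx$ as the delicate contribution and explain why it is delicate ($\int q^2$ is only comparable to $\mathcal{E}$, while the template requires $\sqrt{\mathcal{E}}\,\Psi(\gamma\sqrt{\mathcal{E}})$, and $\Psi$ may be superlinear near $0$), but you never actually produce the inequality $\mathcal{E}(t)\le \tfrac{r_0}{C_1\gamma}\sqrt{\mathcal{E}(t)}\,\Psi(\gamma\sqrt{\mathcal{E}(t)})$ that makes the fit possible; saying the constants ``have to be used carefully'' is a placeholder, not an argument. This is precisely the step where the \emph{lower} bound on $g$ from hypothesis $(H_1)$ enters, and it is the only step that uses it. The paper closes it by a two-case analysis near the origin: if $g_0$ is linear on $[0,\epsilon]$ one has $g(s)\ge c_1^\star|s|$, and if $g_0$ is genuinely nonlinear one still gets $g(s)\ge \tfrac{g_0(\epsilon_1)}{\epsilon}|s|$ on the relevant range; applied at $s=\gamma^{1/2}\mathcal{E}(t)^{1/4}$ (which Lemma \ref{lama1} places below $r_0$ for $t\ge T_0$) this yields a reverse bound of the form $C_1\,\gamma^{1/2}\mathcal{E}^{1/4}\le \Psi(\gamma\sqrt{\mathcal{E}})/\big(\gamma^{1/2}\mathcal{E}^{1/4}\big)\cdot\gamma^{1/2}\mathcal{E}^{1/4}$, and, combined with $\gamma\sqrt{\mathcal{E}}\le r_0^2$, gives $\gamma\sqrt{\mathcal{E}}\le\tfrac{r_0}{C_1}\Psi(\gamma\sqrt{\mathcal{E}})$, hence $\beta\int_0^1 q^2\,dx\le\tfrac{2\beta}{\tau}\mathcal{E}\le\tfrac{2\beta r_0}{\tau C_1\gamma}\sqrt{\mathcal{E}}\,\Psi(\gamma\sqrt{\mathcal{E}})$. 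Without some such use of the lower bound on $g$, the key estimate you reduce everything to cannot be established, so as written the proof is incomplete at its critical point.
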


\begin{proof}
We assume that the initial data $U_{0}\in D(A)$. Then, thanks to the
smoothness of the solution, we have
\begin{equation*}
2\int_{0}^{x}\psi _{t}(t,z)\psi _{tx}(t,z)dz=\psi _{t}^{2}(t,x)-\psi
_{t}^{2}(t,0).
\end{equation*}%
Using to the Dirichlet boundary conditions \eqref{cb} at $x=0$, we have
\begin{equation*}
\psi _{t}^{2}(t,x)=2\int_{0}^{x}\psi _{t}(t,z)\psi _{tx}(t,z)dz.
\end{equation*}%
Applying the Cauchy-Schwarz inequality, we have
\begin{align*}
\psi _{t}^{2}(t,x)\leq 2\sqrt{\left( \int_{0}^{1}\psi _{t}^{2}(t,\cdot)dx\right)
}\sqrt{\left( \int_{0}^{1}\psi _{tx}^{2}(t,\cdot)dx\right) }
\\
\leq \frac{4}{\rho _{2}}\sqrt{\mathcal{E}(t)}%
\sqrt{E_{\star }(0)}, \hspace{0.6cm}%
\forall \ x\in (0,1), \ \forall \ t\geq 0.
\end{align*}%
Using \eqref{energyde1} and the fact that $E_{\star}(t)\leq E_{\star}(0)$ , we deduce that

\begin{equation*}
\psi _{t}^{2}(t,x)\leq \mathcal{\gamma }\sqrt{\mathcal{E}(t)}, \hspace{0.6cm}\forall \ t\geq 0,%
\hspace{0.5cm} \forall \ x\in (0,1),
\end{equation*}%
where $\mathcal{\gamma }$ is given by
\begin{equation}
\mathcal{\gamma }=\frac{4}{\rho _{2}}\sqrt{E_{\star }(0)}.  \label{gt}
\end{equation}%
Thanks to Theorem \ref{TH1}, we have $\psi _{t}\in W^{1,\infty }(0,\infty
,L^{2}(0,1)).$ From \eqref{gt}  and the above regularity of $\psi _{t}$, we
have
\begin{equation}
\Vert \psi _{t}^{2}(t,\cdot)\Vert _{L^{\infty }(0,1)}\leq \mathcal{\gamma }\sqrt{%
E(t)},\hspace{1cm}\forall \ t\geq 0.
\end{equation}%

Thanks to the dissipation inequality \eqref{energyde} and using \eqref{psi},
we have
\begin{equation*}
E^{\prime }(t)=-\beta \int_{0}^{1}q^{2}(t,x)dx-\int_{0}^{1}a(x)\psi_t ^{2}(t,x)%
\tilde{\Psi}(\psi_t ^{2}(t,x))dx.
\end{equation*}%
 On the other hand, from  the experession of the energy $\mathcal{E}$ we have the following relation between $\mathcal{E}^{\prime}$ and $E^{\prime}$
\begin{equation*}
\mathcal{E}^{\prime}(t)= E^{\prime}(t,U)-\rho_3 \theta_0(0) \frac{d}{dt}\left(\int_0^1 \theta(t,x) dx \right).
\end{equation*}
Using \eqref{1} and the boundary conditions, we have
$\frac{d}{dt}\int^{1}_{0} \theta (x,t)dx=0,$   $\mathcal{E}^{\prime}(t)=E^{\prime}(t,U).$\\

Moreover, using the Dafermos strong stabilization result, that is $%
\lim_{t\rightarrow \infty } \mathcal{E}(t)=0$, we deduce that there exists $T_{0}\geq 0$
such that $\psi _{t}^{2}$ has values in which $\tilde{\Psi}$ is increasing.%
\newline
Hence, we have
\begin{equation*}
\tilde{\Psi}(|\psi _{t}^{2}(t,\cdot)|)\leq \tilde{\Psi}(\mathcal{\gamma }\sqrt{%
\mathcal{E}(t)}),\hspace{1cm}\forall \ t\geq T_{0},\ \forall \ x\in (0,1).
\end{equation*}%
Using the last inequality we obtain
\begin{equation}
\int_{0}^{1}a(x)\psi_t ^{2}(t,x)\tilde{\Psi}(\psi_t ^{2}(t,x))dx\leq \frac{%
2 \alpha _{a}}{\rho_2\mathcal{\gamma }}\sqrt{\mathcal{E}(t)}\Psi (\mathcal{\gamma }%
\sqrt{\mathcal{E}(t)}),\hspace{1cm}\forall \ t\geq T_{0},
\end{equation}%
where
\begin{equation}
\alpha _{a}=\Vert a\Vert _{L^{\infty }(0,1)}.  \label{alpha1}
\end{equation}%
Moreover, using Lemma \ref{lama1}, we obtain
\begin{equation*}
\mathcal{E}(t)^{\frac{1}{4}}\leq \left( \frac{r_{0}}{\mathcal{\gamma }^{1/2}}\right) ,%
\hspace{1cm}\forall \ t\geq T_{0}.
\end{equation*}

\begin{itemize}
\item[\protect\underline{First case.}] Let $g_{0}$ be a linear function on $%
[0,\epsilon ]$, the hypothesis ($H_{1}$) becomes 
\begin{equation*}
c_{1}^{\star}|s|\leq |g(s)|\leq c_{2}^{\star }|s|,\hspace{1cm}\mbox{for all}
\ s\in \mathbb{R}.
\end{equation*}

In particular, for $s=\mathcal{\gamma}^{\frac{1}{2}} (\mathcal{E}(t))^{\frac{1}{4}},$
and, note that $g:\mathbb{R}\rightarrow \mathbb{R}$, then we have
\begin{equation}  \label{case1}
\mathcal{\gamma}^{\frac{1}{2}} (\mathcal{E}(t))^{\frac{1}{4}}\leq \frac{1}{c_1^{\star}}
g(\mathcal{\gamma}^{\frac{1}{2}} (\mathcal{E}(t))^{\frac{1}{4}}) \leq \frac{1}{%
c_1^{\star}}\Psi(\mathcal{\gamma}\sqrt{\mathcal{E}(t)}), \hspace{0.5cm} \forall \
t\geq T_0.
\end{equation}
\item[\protect\underline{Second case.}] \ \newline
Let $g_{0}$ be a nonlinear function on $[0,\epsilon ]$. We assume that $\max
(r_0,g_{0}(r_0))<\epsilon $.\newline
Let $\epsilon _{1}=\min (r_0,g_{0}(r_0))$, we deduce from the hypothesis $%
(H_ {1}) $ that
\begin{equation*}
\frac{g_{0}(\epsilon _{1})}{\epsilon }|s|\leq \frac{g_{0}(|s|)}{|s|}|s|\leq
|g(s)|\leq \frac{g_{0}^{-1}(|s|)}{|s|}|s|\leq \frac{g^{-1}_{0}(\epsilon )}{%
\epsilon _{1}}|s|,
\end{equation*}%
for all $s$ satisfying $\displaystyle \epsilon _{1}\leq |s|\leq \epsilon $.%
\newline
Using the fact that $|\mathcal{\gamma}^{\frac{1}{2}}(\mathcal{E}(t))^{\frac{1}{4}%
}|\leq r_0, $ we infer that

\begin{equation}  \label{case2}
\frac{g_{0}(\epsilon _{1})}{\epsilon } \mathcal{\gamma}^{\frac{1}{2}}(\mathcal{E}(t))^{%
\frac{1}{4}}\leq g(\mathcal{\gamma}^{\frac{1}{2}}(\mathcal{E}(t))^{\frac{1}{4}})\leq
\Psi(\mathcal{\gamma}\sqrt{\mathcal{ E}(t)}), \hspace{0.5cm} \forall \ t\geq T_0.
\end{equation}
\end{itemize}

Now, thanks to \eqref{case1} and \eqref{case2}, we deduce that for the two
cases we obtain the following estimate
\begin{align*}
\beta \int_0^1 q^2(t,x)dx\leq \frac{2\beta }{\tau} \mathcal{E}(t) \leq \frac{2\beta r_0
\sqrt{\mathcal{E}(t)}}{\tau C_1 \mathcal{\gamma}}\Psi(\mathcal{\gamma}\sqrt{\mathcal{E}(t)}),
\hspace{1cm} \forall \ t\geq T_0,
\end{align*}
where $C_1$ is a positive constant.\newline
Hence, there exists $T_0\geq0 $ such that the following inequality holds
\begin{equation}
-\mathcal{E}^{\prime }(t)\leq \frac{2}{\mathcal{\gamma}}\left(\frac{\alpha_a
}{\rho_2} + \frac{\beta r_0}{\tau C_1}\right) \sqrt{\mathcal{E}(t)}\Psi(\mathcal{\gamma%
}\sqrt{\mathcal{E}(t)}), \hspace{0.3cm} \forall \ t\geq T_0.
\end{equation}
Thus we deduce that
\begin{equation*}
\mathcal{K}(\mathcal{\gamma} \sqrt{\mathcal{E}(t)})\leq \left(\frac{ \alpha_a%
}{\rho_2 }+ \frac{\beta r_0 }{\tau C_1}\right)(t-T_0).
\end{equation*}
Since $\mathcal{K}$ is a nonincreasing function, this completes the proof of %
\eqref{13}.
\end{proof}
Now, we will use the following key comparison with the result borrowed from
Lemma 2.4 in \cite{2}.
\begin{lemma}
\label{lama55} Let $\Psi $ be a given strictly convex set of $C^1$ function from
$[0,r_0^2]$  to $\mathbb{R}$ such that $H(0)=H^{\prime}(0)=0$, where $%
r_0>0$ and sufficiently small. Let us define $\mathbf{\Lambda}$ on $(0,
r^2_0]$ by
\begin{equation}
\mathbf{\Lambda}(x)=\frac{H(x)}{x H^{\prime }(x)}.
\end{equation}
Let $z$ be the solution of the following ordinary differential equation:
\begin{equation}  \label{edo}
z^{\prime}(t) + \kappa H(z(t)) = 0 ,\hspace{0.5cm} z(0) = z_0, \forall \ t
\geq 0 ,
\end{equation}
where $z_0 > 0$ and $\kappa > 0$ are given. Then $z(t)$ is well defined for all
$t \geq 0$, and it decays to $0$, as $t \rightarrow \infty$. Assume, in addition,
that $(H_2)$ holds. Then there exists $T_1 > 0$ such that for all $R > 0,$
there exists a constant $C > 0$ such that
\begin{equation}  \label{lemmaeq}
z(t) \geq C(H^{\prime})^{-1}\left(\frac{R}{t}\right),\hspace{0.5cm}\forall \
t \geq T_1.
\end{equation}
\end{lemma}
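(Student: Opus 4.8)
The plan is to split the proof into a qualitative part (global existence, monotonicity and decay of $z$) and a quantitative part (the lower bound \eqref{lemmaeq}), the latter being the substantive step.

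For the qualitative part, I would first note that since $H=\Psi$ is $C^{1}$ on $[0,r_{0}^{2}]$ it is locally Lipschitz, so the Cauchy--Lipschitz theorem yields a unique maximal solution of \eqref{edo} with $z(0)=z_{0}\in(0,r_{0}^{2}]$. Strict convexity together with $H(0)=H'(0)=0$ forces $H'$ to be strictly increasing, hence $H'(x)>0$ and $H(x)>0$ for $x\in(0,r_{0}^{2}]$, so that $H$ is strictly increasing. Consequently $z'(t)=-\kappa H(z(t))<0$ whenever $z(t)>0$, so $z$ is strictly decreasing and confined to $(0,z_{0}]$; by uniqueness it cannot merge with the stationary solution $z\equiv 0$, so it stays positive and is global. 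Being decreasing and bounded below, $z(t)\to\ell\ge0$; if $\ell>0$ then $z'(t)\to-\kappa H(\ell)<0$, which is incompatible with convergence, so $\ell=0$ and $z(t)\to0$.

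For the quantitative part I would separate variables. Setting
\begin{equation*}
\Phi(x)=\int_{x}^{z_{0}}\frac{d\tau}{H(\tau)},
\end{equation*}
the relation $z'/H(z)=-\kappa$ integrates to $\Phi(z(t))=\kappa t$, so $z(t)=\Phi^{-1}(\kappa t)$, where $\Phi$ is a decreasing bijection of $(0,z_{0}]$ onto $[0,\infty)$; the divergence $\Phi(0^{+})=\infty$ follows from $\limsup_{x\to0}\Lambda(x)<1$, which forces $H$ to vanish faster than linearly at $0$. Because $\Phi$ is decreasing, the target estimate $z(t)\ge C(H')^{-1}(R/t)$ is equivalent, after putting $s=(H')^{-1}(R/t)$ so that $t=R/H'(s)$, to the inequality $\Phi(Cs)\ge \kappa R/H'(s)$ for all small $s$. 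Everything therefore reduces to the comparison
\begin{equation*}
\Phi(x)\ge\frac{c_{0}}{H'(x)},\qquad 0<x\le x_{\ast},
\end{equation*}
for suitable constants $c_{0}>0$ and $x_{\ast}>0$. Granting this, for $C\in(0,1]$ the monotonicity of $H'$ gives $\Phi(Cs)\ge c_{0}/H'(Cs)$, and the behaviour $H'(Cs)\to0$ as $C\to0$ (made quantitative below) lets me, for each prescribed $R$, choose $C$ small enough that $c_{0}/H'(Cs)\ge \kappa R/H'(s)$ uniformly in small $s$; applying the decreasing map $\Phi^{-1}$ to $\Phi(Cs)\ge\kappa t=\Phi(z(t))$ then gives $z(t)\ge Cs=C(H')^{-1}(R/t)$ for $t\ge T_{1}$, which is \eqref{lemmaeq}.

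It remains to prove the comparison, and this is where $(H_{2})$ enters; I expect it to be the main obstacle. The convexity of $H$ with $H(0)=0$ supplies the two elementary bounds $\tfrac{x}{2}H'(x/2)\le H(x)\le xH'(x)$, which reduce the claim to controlling the oscillation of $H'$ near the origin. Under the first alternative in $(H_{2})$, namely $0<\liminf_{x\to0}\Lambda\le\limsup_{x\to0}\Lambda<1$, the identity $\frac{d}{dx}\log H(x)=\frac{1}{x\Lambda(x)}$ pinches the logarithmic derivative of $H$ between $\frac{1}{\lambda_{0}x}$ and $\frac{1}{\lambda_{1}x}$, and integration over $[x,2x]$ yields the two-sided doubling $2^{1/\lambda_{0}}\le H(2x)/H(x)\le 2^{1/\lambda_{1}}$; then $\Phi(x)\ge\int_{x}^{2x}d\tau/H(\tau)\ge x/H(2x)$ together with $H(2x)\le 2^{1/\lambda_{1}}H(x)\le 2^{1/\lambda_{1}}xH'(x)$ gives $\Phi(x)\ge 2^{-1/\lambda_{1}}/H'(x)$. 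When $\liminf_{x\to0}\Lambda$ may vanish (super-polynomial growth) the upper doubling fails; this is exactly the regime addressed by the second alternative in $(H_{2})$, whose quantity $\frac{\Psi(\mu x)}{\mu x}\int_{x}^{z_{1}}\frac{dy}{\Psi(y)}$ is, up to a harmless additive constant in the integral, $\frac{H(\mu x)}{\mu x}\Phi(x)$, and whose positive liminf directly furnishes $\Phi(x)\ge c\,\mu x/H(\mu x)$; combining this with the convexity bound $H(\mu x)\le \mu x\,H'(\mu x)$ and the monotonicity of $H'$ again produces $\Phi(x)\ge c_{0}/H'(x)$. The delicate points are to make all these estimates uniform on a fixed interval $(0,x_{\ast}]$, to verify that $s=(H')^{-1}(R/t)$ ranges in $(0,x_{\ast}]$ once $t\ge T_{1}$, and to quantify the growth of $H'(s)/H'(Cs)$ as $C\to0$ so that the choice of $C$ in the reduction is legitimate for every $R$; once these are secured the comparison principle and the monotonicity of $\Phi$ close the argument.
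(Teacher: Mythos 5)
First, a remark on the comparison itself: the paper contains no proof of this lemma — it is imported verbatim as Lemma 2.4 of \cite{2} — so your attempt can only be measured against the strategy of that reference, which is in fact the one you follow: separation of variables giving $z(t)=\Phi^{-1}(\kappa t)$, followed by a convexity comparison between $\Phi$ and $1/H'$. Your qualitative part (global existence, positivity by uniqueness against the trivial solution, decay to $0$) and your reduction of \eqref{lemmaeq} to the inequality $\Phi(Cs)\ge \kappa R/H'(s)$ with $s=(H')^{-1}(R/t)$ are correct.

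The genuine gap is the step you yourself flag but do not carry out: you need $\inf_{0<s\le x_{\ast}}H'(s)/H'(Cs)\to\infty$ as $C\to 0$, and this is \emph{not} a consequence of convexity alone. For $H'(x)=1/\log(1/x)$ one has $H'(s)/H'(Cs)=1+\log(1/C)/\log(1/s)$, whose infimum over small $s$ equals $1$ for every $C$, so your reduction would collapse; this $H$ has $\Lambda(x)\to 1$ and is excluded precisely by the requirement $\limsup_{x\to 0}\Lambda(x)<1$ in $(H_2)$, which is therefore the hypothesis you must actually invoke here. The repair is short: writing $\lambda_{1}:=\limsup_{x\to0}\Lambda(x)<1$, the identity $H'/H=1/(x\Lambda(x))\ge 1/(\lambda_{1}x)$ integrates to $H(s)/H(2Cs)\ge (2C)^{-1/\lambda_{1}}$, and the convexity bounds $H'(s)\ge H(s)/s$ and $H'(Cs)\le H(2Cs)/(Cs)$ then give $H'(s)/H'(Cs)\ge 2^{-1/\lambda_{1}}C^{\,1-1/\lambda_{1}}\to\infty$ uniformly in $s\le x_{\ast}$. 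Two smaller slips should also be fixed: in the second alternative of $(H_2)$ the bound $\Phi(x)\ge c/H'(\mu x)$ yields $\Phi(x)\ge c/H'(x)$ by monotonicity of $H'$ only when $\mu\le 1$; for $\mu>1$ the inequality goes the wrong way, and you must keep the factor $\mu$ and absorb it into the final constant $C$ (requiring $C\le 1/\mu$ in the last comparison). And in your two-sided doubling estimate the exponents $1/\lambda_{0}$ and $1/\lambda_{1}$ are interchanged if $\lambda_{0}$ denotes the liminf. With these repairs the argument closes and coincides in substance with the cited one.
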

\begin{proof}[Proof of Theorem \ref{TH3}]\mbox{}\\
Let $z(t)$ be the solution of the ordinary differential equation \eqref{edo}, where we assume that $z_0= \mathcal{\gamma} \sqrt{ \mathcal{E}(T_0)}$, $H=\Psi$ and $%
\kappa=\sigma$.\newline
Hence, we have
\begin{equation*}
\mathcal{K}(z(t))=\left(\frac{ \alpha_a}{\rho_2 }+ \frac{\beta r_0
}{\tau C_1}\right) t, \hspace{0.5cm}\forall t\geq 0.
\end{equation*}
We set $\widehat{z}(t)=z(t-T_0)$, then we have

\begin{equation*}
\widehat{z}(t)=\mathcal{K}^{-1}\left( \left(\frac{ \alpha_a}{\rho_2
}+ \frac{\beta r_0 }{\tau C_1}\right) (t-T_0)\right), \hspace{0.5cm} \forall
\ t\geq T_0.
\end{equation*}
Thanks to \eqref{13}, we have
\begin{equation}  \label{low1}
\frac{\widehat{z}(t)^2}{\mathcal{\gamma}^2} \leq \mathcal{E}(t), \hspace{0.5cm}
\forall \ t\geq T_0.
\end{equation}
We apply Lemma \ref{lama55} to $\Psi=H$ for $R=1$, then, we obtain the existence of  a constant $C_{\mathcal{\gamma}}$ depending on $\mathcal{\gamma}$ and a
positive constant $T_1$, such that
\begin{equation}  \label{low2}
(\Psi^{\prime})^{-1}\left( \frac{1}{t}\right)\leq C_{\mathcal{\gamma}} z(t),
\hspace{0.5cm} \forall \ t\geq T_1.
\end{equation}
By using \eqref{low1} and \eqref{low2}, we deduce that
\begin{equation*}
(\Psi^{\prime})^{-1}\left( \frac{1}{t-T_0}\right)\leq C_{\mathcal{\gamma}}
\widehat{z}(t), \hspace{0.5cm} \forall \ t\geq T_0+ T_1.
\end{equation*}
Hence, we have \eqref{lemmaeq}.
\end{proof}
We conjecture that driving the lower estimates leads to optimal energy decay rates in general. However, the proof of such a result is open.

\section{Examples}

\label{sect5} Throughout this section, we will first introduce some examples
which allow us to illustrate the main advantages of our results. Let  $c^{\prime }$  be a positive constant  explicitly given here and it only depends on the constant $\sigma $.\newline

\begin{itemize}
\item[\protect\underline{Example 1.}] Let $g(s)=s^p$, $\forall s\in
(0,r_0^2] $ for $p>1$.\newline
We have $\Psi(s)=s^{\frac{p+1}{2}},$ $\Psi$ is strictly convex, for $s \in
[0,r_0^2],$ and $\Psi^{\prime }(s)=\frac{p+1}{2} s^{\frac{p-1}{2}}$, then
\begin{equation*}
\tilde{\Psi}(s)=\frac{\Psi(s)}{s}=s^{p-1}, \ \mbox{for} \ p>1, \ \forall s
\in ]0,r_0^2].
\end{equation*}
Thus, $\tilde{\Psi}$ is nondecreasing on $]0,r_0^2].$ \newline
Since $\Lambda(x)=\frac{2}{p+1}<1,$ this proves that $g$ satisfies the first
assumption of $(H_2)$.\newline
By applying \eqref{lower2} of Theorem \ref{TH3}, we obtain
\begin{equation}  \label{exp1}
\mathcal{E}(t)\geq c^{\prime}\   t^{ \frac{-4}{p-1}}.
\end{equation}

\item[\protect\underline{Example 2.}] Let $g(s)= \frac{1}{s} exp(-(ln
(s))^2) $ , for all $s\in (0,r_0^2]$. This yields
\begin{equation*}
\Psi(s)=exp\left(\frac{-1}{4}(ln( s))^2\right),
\end{equation*}
and
\begin{equation*}
\Psi^{\prime }(s)=-\frac{ln (s)}{2s} exp\left(-\frac{1}{4}(ln (s))^2\right),
\end{equation*}
\begin{equation*}
\tilde{\Psi}(s)=\frac{\Psi(s)}{s}= \frac{1}{s} exp\left(\frac{-1}{4}(ln(
s))^2\right) , s \in (0,r_0^2].
\end{equation*}
Thus, $\tilde{\Psi}$ is nondecreasing on $]0,r_0^2].$\newline
In addition, $\Lambda(s)=\frac{-2}{ln (s)},$ thus, $\lim_{s\rightarrow
0}\Lambda (s)=0<1,$ and we get also for any $\mu>1$,
\begin{equation*}
\liminf_{s\rightarrow 0}\frac{\Psi(\mu s)}{\mu s}\int_s^{z_1}\frac{1}{\Psi(y)%
}dy>0.
\end{equation*}
It is easy to see that $\Psi^{\prime }(t)$ is equivalent to $D(t),$ as $t$
goes to $\infty$, where $D(t)= exp(-\frac{1}{4}(ln(t))^2).$\newline
So, we have $D^{-1}(t)=exp(- 2(ln(t)^{\frac{1}{2}}));$ here we apply the
result of the Theorem \ref{TH3} and we obtain the following inequality
\begin{equation*}
\mathcal{E}(t)\geq c^{\prime }\hspace{0.1cm} exp(- 4(ln(t)^{\frac{1}{2}})).
\end{equation*}
\end{itemize}
 {By these examples we obtain  explicit lower bounds  which characterize the decay rate of the energy $E(t)$, associated with the solution of \eqref{1},  to the  correponding non-zero equilibrium state energy $E_{\infty}$. }

\end{document}